\newtheorem{theorem}{Theorem}
\newtheorem{corollary}[theorem]{Corollary}
\newtheorem{definition}[theorem]{Definition}
\newtheorem{lemma}[theorem]{Lemma}
\newtheorem{proposition}[theorem]{Proposition}
\newtheorem{remark}[theorem]{Remark}
\let\e=\varepsilon
\let\d=\delta
\let\pt=\partial
\let\O=\Omega
\let\G=\Gamma
\let\g=\gamma
\let\th=\theta
\let\b=\beta
\newcommand\normmm[1]{\left\vert\kern-0.25ex \left\vert\kern-0.25ex \left\vert #1 \right\vert\kern-0.25ex \right\vert\kern-0.25ex \right\vert}
\newcommand\normm[1]{\left\lVert#1\right\rVert}
\newcommand\norm[1]{\left\lvert#1\right\rvert}
\newcommand\normmb[1]{\left\lbrack\kern-0.25ex\left\lbrack\ #1 \kern0.75ex\right\rbrack\kern-0.25ex\right\rbrack}
\newcommand\inn[1]{\left\langle#1\right\rangle}
\newcommand{\R}{\mathbb{R}}
\renewcommand{\P}{\mathbf{P}}
\renewcommand{\S}{\mathbf{S}}
\newcommand{\ip}{(\mathbf{I}-\mathbf{P})}
\newcommand{\ipc}{(\mathbf{I}-\mathbf{P}_\c)}
\newcommand{\be}{\begin{equation}}
\newcommand{\bee}{\begin{equation*}}
\newcommand{\bm}{\begin{multline}}
\newcommand{\ee}{\end{equation}}
\newcommand{\eee}{\end{equation*}}
\newcommand{\dd}{\mathrm{d}}
\renewcommand{\c}{\mathfrak{u} }
\newcommand{\1}{\mathbf{1}}
\title{Diffusive Expansion of the Boltzmann equation for the flow past an obstacle}
\author{Yan Guo, Junhwa Jung }
\date{}
\begin{document}

\maketitle

\makebox[\textwidth]{Dedicate to Professor Zhouping Xin's 65 birthday}

\begin{abstract}
The exterior domain problem is essential in fluid and kinetic equations. In this paper, we establish the validity of the diffusive expansion for the Boltzmann equations to the Navier-Stokes-Fourier system up to the critical time in an exterior domain with non-zero passing flow. We apply the $L^3-L^6$ framework to the unbounded domain in this paper. 
\end{abstract}

\section{Introduction}
The purpose of this paper is to establish the incompressible Navier-Stokes-Fourier expansion from the unsteady Boltzmann equation with diffusive boundary conditions in the exterior domain. The rescaled Boltzmann equation with small Knudsen and Mach numbers is given by the following equation
\begin{eqnarray}
&& \label{Boltzeqfull} \e \pt_t F + v \cdot \nabla_x F = \frac{1}{\e}Q(F,F), \quad \text{in } \R_{+} \times \Omega^c \times \R^{3}, \\
&& \label{Boltzbdd} F|_{\g_{-}} = \mathscr{P}^{w}_{\gamma}(F), \\
&& F(t,x,v)|_{t=0} = F_{0}(x,v) \ge 0, \quad \text{on } \Omega^{c} \times \R^3.
\end{eqnarray}
Where, $F(t,x,v)$ is the distribution for particles at time $t \ge 0$, position $x$ and velocity $v$. The Boltzmann collision operator $Q$ can be written as
\begin{eqnarray} \label{collisionoperator}
Q(f,g)(v) &=& Q^{+}(f,g) - Q^{-}(f,g), \\
Q^{+}(f,g)(v) &=& \iint_{\R^3 \times \S^2} B(\omega,{v-u}) f(v')g(u') \dd \omega \dd u,\\
Q^{-}(f,g)(v) &=& f(v) \iint_{\R^3 \times \S^2} B(\omega,{v-u}) g(u) \dd \omega \dd u,
\end{eqnarray}
where, $v'$ and $u'$ defined by
\begin{eqnarray*}
v' = v - \omega(v-u) \cdot \omega, \quad u' = u + \omega(v-u) \cdot \omega,
\end{eqnarray*}
and $B(\omega,{v-u}) $ is the cross section for hard potentials with Grad's angular cut off, so that $\int_{\{\norm{\omega}=1\}} B(\omega, V) \dd \omega \lesssim \norm{V}^{\th}$ for $0\le \th \le 1$, depending on the interaction potential.

Throughout this paper, $\Omega = \{x : \xi(x) <0 \}$ is a general bounded domain in $\R^3$. We denote its boundary as $\pt \Omega = \{x : \xi(x) = 0 \}$, where $\xi(x)$ is a $C^2$ function. We assume $\nabla \xi(x) \neq 0$ on $\pt \Omega$. The inner normal vector at $\pt \Omega$ is given by
\begin{eqnarray*}
n(x) = -\frac{\nabla \xi(x)}{\norm{\nabla \xi(x)}}
\end{eqnarray*}
and it can be extended smoothly near $\pt \Omega$. Let $\gamma = \pt \Omega \times \R^3 = \gamma_{+} \cup \gamma_{0} \cup \gamma_{-}$ with
\begin{eqnarray}
\label{defboundary}
\gamma_{\pm} = \left\{(x,v) \in \pt \Omega \times \R^3 : n(x) \cdot v \gtrless 0 \right\}, \quad \gamma_{0} = \left\{(x,v) \in \pt \Omega \times \R^3 : n(x) \cdot v = 0 \right\}
\end{eqnarray}

The interaction of the gas with the boundary $\pt \Omega$ is given by the diffusive boundary condition, defined as follows:
$$
M_{\rho, u, T} := \frac{\rho}{(2 \pi T)^{\frac{3}{2}}} exp\left[-\frac{\norm{v-u}^2}{2T}\right],
$$
be the local Maxwellian with density $\rho$, mean velocity $u$, and temperature $T$
and 
$$\mu = M_{1,0,1} = \frac{1}{(2 \pi )^{\frac{3}{2}}} exp\left[-\frac{\norm{v}^2}{2}\right]. $$
On the boundary, $F$ satisfies the diffusive reflection condition, defined as 
$$
F(t,x,v) = \mathscr{P}^{w}_{\gamma}(F)(t,x,v) \quad \text{on } \gamma_{-},
$$
where
$$
\mathscr{P}^{w}_{\gamma}(F)(t,x,v) := M^{w}(v) \int_{\R^3} F(t,x,w)(n(x) \cdot w) \dd w,
$$
with wall Maxwellian defined as
$$
M^{w} = \sqrt{2 \pi} \mu, \quad \int_{ \{v \cdot n \gtrless 0\} } M^{w}(v) \norm{n \cdot v} \dd v =1.
$$
Since the domain is the complement of a compact domain, we need to specify the conditions at infinity. 
Because we study the problem in the small Mach number
regime, we assume that the velocity at infinity is of order $\e$. In other words, we fix a constant vector $\mathbf{u}$ denoting,
\begin{eqnarray}
v_{\mathbf{u}} := v- \e \mathbf{u}, \quad \mu_{\mathbf{u}}(v) := \mu (v_{\mathbf{u}}) = M_{1,\e \mathbf{u}, 1},
\end{eqnarray}
we assume in a suitable sense
$$
\lim_{\norm{x} \to \infty} F(x,v) = \mu_{\mathbf{u}}(v).
$$

To derive the hydrodynamic limit let's think about Hilbert expansion near $\mu_{\mathbf{u}}$which gives as follows:
\begin{eqnarray}
\label{hilbertexpansion} F = \mu_{\mathbf{u}} + \sqrt{\mu_{\mathbf{u}}}\sum_{n=1}^{\infty} \e^{n} f_{n}.
\end{eqnarray}
To determine the coefficient of $f_1, \cdots f_n, \cdots $ in such expansion, we plug the formal expansion \eqref{hilbertexpansion} into \eqref{Boltzeqfull}:
\begin{eqnarray} \label{Boltzepower}
(\e \pt_t + v \cdot \nabla)\sum_{n=1}^{\infty} \e^n f_n = \frac{1}{\e \sqrt{\mu_{\mathbf{u}}}}Q(\mu_{\mathbf{u}} + \sqrt{\mu_{\mathbf{u}}}\sum_{n=1}^{\infty} \e^n f_n, \mu_{\mathbf{u}} + \sqrt{\mu_{\mathbf{u}}} \sum_{n=1}^{\infty} \e^n f_n).
\end{eqnarray}
To expand right hand side, we define $L_{\mathbf{u}}$, the linearlized Boltzmann operator as
\begin{eqnarray}
L_{\mathbf{u}} f := - \mu_{\mathbf{u}}^{-\frac{1}{2}} [Q(\mu_{\mathbf{u}}, \mu_{\mathbf{u}}^{\frac{1}{2}}f) + Q( \mu_{\mathbf{u}}^{\frac{1}{2}}f, \mu_{\mathbf{u}})] : = \nu_{\c} f - K f,
\end{eqnarray}
where $\nu_{\c} := \int_{\R^3 \times \{ \norm{\omega} = 1 \}}B(v_{\mathbf{u}}-v',\omega) \mu(v_{\c}) \dd v' \dd \omega$ is $0 \le \nu_0 \norm{v_{\mathbf{u}}}^{\th} \le \nu_{\c} \le \nu_1 \norm{v_{\mathbf{u}}}^{\th}$ and $K$ is a compact operator on $L^{2}(\R^3_{v})$. 
Define the nonlinear collision operator $\G$ as
\begin{eqnarray}
\G(f,g) = \frac{1}{\sqrt{\mu_{\mathbf{u}}}}Q(\sqrt{\mu_{\mathbf{u}}}f,\sqrt{\mu_{\mathbf{u}}}g).
\end{eqnarray}
Comparing the powers of $\e$ on both sides of the equation \eqref{Boltzepower}, we obtain the following equations:
\begin{eqnarray*}
0 &=& - L_{\mathbf{u}} f_1, \\
v_{\mathbf{u}} \cdot \nabla_x f_1 &=& - L_{\mathbf{u}} f_2 + \G(f_1,f_1),\\
\pt_t f_1 + v_{\mathbf{u}} \cdot \nabla_x f_2 + {\mathbf{u}} \cdot \nabla_x f_1  &=& -L_{\mathbf{u}} f_3 + \G(f_1,f_2) + \G(f_2,f_1), \\
&\vdots& \\
\pt_t f_{n} + v_{\mathbf{u}} \cdot \nabla_x f_{n+1} + {\mathbf{u}} \cdot \nabla_x f_{n}  &=& -L_{\mathbf{u}} f_{n+2} + \sum_{i+j=n+2, i,j\ge 1} \G(f_{i},f_{j}).
\end{eqnarray*}
From the above relation, we can obtain the fluid equations. We can easily check that  $L_{\mathbf{u}}$ is an operator on $L^{2}(\R^3_{v})$ whose kernel is 
\begin{eqnarray} \label{kerl}
\ker L_{\mathbf{u}} = \text{span} \{1, v_{\mathbf{u}}, \norm{v_{\mathbf{u}}}^2 \} \sqrt{\mu_{\c}}.
\end{eqnarray}
Let $\P_{\mathbf{u}}$ be the orthogonal projection on $\ker L_{\mathbf{u}}$.

Define $F$ as a small perturbation of $\mu_{\mathbf{u}}$, which satisfies the following relation:
\begin{eqnarray}
F = \mu_{\mathbf{u}} + \sqrt{\mu_{\mathbf{u}}}(\e f_1 + \e^2 f_2 + \e^{\frac{3}{2}}R),
\end{eqnarray}
where $R$ is the remainder.
The functions $f_1$ and $f_2$ are constructed by the solution of the fluid equation defined by the following:
\begin{eqnarray} \label{deff1}
f_1 = u \cdot v_{\mathbf{u}} \sqrt{\mu_{\mathbf{u}}},
\end{eqnarray}
and
\begin{eqnarray} \label{deff2}
f_2 &=& \frac{1}{2} \sum_{i,j=1}^{3} \mathscr{A}_{ij} \left[\pt_{x_i}u_{j} + \pt_{x_j}u_{i} \right]  + L_{\c}^{-1}\left[\G\left(f_1,f_1\right) \right],
\end{eqnarray}
where $\mathscr{A}_{ij}$ is given by
\begin{eqnarray}
\mathscr{A}_{ij} = -L_{\c}^{-1} \left(\sqrt{\mu} (v_{\mathbf{u}, i} v_{\mathbf{u},j} -\frac{\norm{v_{\mathbf{u}}}^2}{3} \delta_{ij} ) \right),
\end{eqnarray}
and  $u +\c$ is a solution of the Navier-Stokes equation
\begin{eqnarray}
\pt_t u + (u + \c) \cdot \nabla u + \nabla p = \kappa \Delta u,
\quad u = -\c \text{ on } \Omega^c, \quad u \to 0  \text{ as } \norm{x}  \to \infty.
\end{eqnarray}
According to Section \ref{nseqestimate}, the
Navier-Stokes equation has a unique short-time solution if the initial data $u_0 \in L^1 \cap L^\infty$. To show the boundedness of $R$, we need to justify the hydrodynamic limit to the Navier-Stokes equation. The equation for $R$ can be written as follows:
\begin{eqnarray} \label{BoltzR}
\e \pt_t R + v \cdot \nabla_x R + \e^{-1}L_{\c}(R) = h + \tilde{L}(R) + \e^{1/2} \G(R,R),
\end{eqnarray}
where $\tilde{L}(R) = \G(f_1+\e f_2,R) +\G(R,f_1+\e f_2)$ and
\begin{eqnarray*}
h&=&-\e^{1/2} \pt_t (f_1+\e f_2) -\e^{-1/2} v \cdot \nabla_x (f_1+\e f_2) - \e^{-3/2}L_\c(f_1+\e f_2) + \e^{-1/2} \G(f_1+\e f_2,f_1+\e f_2).
\end{eqnarray*}

\subsection{Notation Definition}

\begin{definition}
In this paper we define
\begin{eqnarray*}\normm{f(t)}_{L^{q}_{x}L^{r}_{v}} &:=& \left( {\int_{x}
\left({\int_{v} \norm{f(t)}^{r} \dd v}\right)^{q/r} \dd x ^{} }\right)^{p/q}, \\
\nu_{\c} &:=& \int_{\R^3}B(\omega, \norm{v_{\mathbf{u}}-u}) \mu(u_{\mathbf{u}}) \dd u, \\
\normm{f}_{\nu}&:=&\normm{\nu(v_{\mathbf{u}})^{1/2}f}_{2},\\
\norm{f}_{p,\pm}&:=& \left( \int_{\gamma_{\pm}} \norm{f}^p \dd \gamma \right)^{1/p} := \left( \int_{\gamma_{\pm}} \norm{f}^{p}  \norm{n(x) \cdot v}  \dd s \right)^{1/p}, \\
\inn{f,g} &:=& \int_{\R^3} fg \dd v.
\end{eqnarray*}
If $p$ or $q$ is infinity, then the integral should be replaced by the supremum norm.
\end{definition}

\subsection{Boundary condition}

In this paper, we will focus on the diffusive boundary condition, which is given in the following passage.

For $f \in L^1(\g_{\pm})$ we define
\begin{eqnarray}
\mathscr{P}_{\gamma} f = \mu^{-\frac{1}{2}} \mathscr{P}^{w}_{\gamma}[\mu^{\frac{1}{2}} f], 
\end{eqnarray}
where
$$
\mathscr{P}^{w}_{\gamma}(F)(t,x,v) := M^{w}(x,v) \int F(t,x,w)(n(x) \cdot w) \dd w,
$$
with wall Maxwellian defined as
$$
M^{w} = \sqrt{2 \pi} \mu, \quad \int_{ \{v \cdot n \gtrless 0\} } M^{w}(v) \norm{n \cdot v} \dd v =1.
$$
For $f \in L^{1}(\g_{\pm})$ we define
\begin{eqnarray}
&&\mathscr{P}^{\c}_{\g} f = \mu_{\c}^{-1/2} \mathscr{P}^{\omega}_{\g} [\mu_{\c}^{1/2} f] = \sqrt{2 \pi} \frac{\mu}{\sqrt{\mu_{\c}}} z_{\g_{+}} (f), \\
&&z_{\g_{\pm}} (f)(t,x) = \int_{\{v \cdot n (x) \gtrless 0 \}} \sqrt{\mu_{ \c}} \norm{v \cdot n(x)} f(t, x,v) \dd v.
\end{eqnarray}
The boundary condition for $R$ is
\begin{eqnarray}
R = \mathscr{P}^{\c}_{\g} R + \e^{1/2} r,
\end{eqnarray}
where 
\begin{eqnarray}
r = \mathscr{P}^{\c}_{\g}[f_2- \phi_{\e}] - (f_2- \phi_{\e}), \text{ on } \g_{-},
\end{eqnarray}
with $\phi_{\e}$ defined as 
\begin{eqnarray}
\phi_{\e} = \e^{-2} \mu_{\c}^{-1/2} [M_{1,\e (u + \c ),1} - \mu_{\c} - \e \sqrt{\mu_{\c}}f_1],
\end{eqnarray}
such that
\begin{eqnarray}
\norm{\phi_{\e}} \le C_{\b} (\norm{u}^2+  \norm{\c}^2)  \exp [- \b \norm{v}^2] \quad \text{ for any } \b<\frac{1}{4}.
\end{eqnarray}
From the definition of $r$ it follows that 
\begin{eqnarray} \label{eq27}
    \norm{r}_{2,+} +\norm{r}_{\infty} \lesssim \norm{\c}+ \norm{u_0}.
\end{eqnarray}

\subsection{Main Result}
The main result of this paper is the following theorem.
\begin{theorem} \label{mainthm}
Let $\Omega$ be a $C^\infty$ bounded open set in $\R^3$, and let $\Omega^c = \R^3 \backslash \bar{\Omega}$. For any $0< M $, consider the following boundary value problem:
\begin{equation} \label{thm1eq}
\left\{ \begin{array}{l l}
\e \pt_t F + v \cdot \nabla_x F = \frac{1}{\e}Q(F,F) & \quad \text{in } \R_{+} \times \Omega^c \times \R^{3}, \\
F|_{\g_{-}} = \mathscr{P}^{w}_{\gamma}(F) & \quad \text{on } \g_{-},\\
\lim_{\norm{x} \to \infty} F(t,x,v) = \mu_\c (v), & \\
F(t,x,v)\big|_{t=0} = F_{0}(x,v) \ge 0 & \quad \text{on } \Omega^{c} \times \R^3.
\end{array} \right.
\end{equation}
Suppose the initial datum takes the form $F_0 = \mu_\c + \e \sqrt{\mu_\c} f_0 \ge 0$ such that
\begin{equation}
f_0 =  f_{1,0} + \e f_{2,0}
\end{equation}
and
\begin{eqnarray}
f_{1,0} &=&  u_0(x) \cdot v_\c \sqrt{\mu_\c},\\
f_{2,0} &=& \frac{1}{2} \sum_{i,j=1}^{3} \mathscr{A}_{ij} \left[\pt_{x_i}u_{0,j} + \pt_{x_j}u_{0,i} \right] \\
&& + L^{-1}\left[\G\left(f_{1,0} ,f_{1,0} \right) \right],
\end{eqnarray}
satisfies
\begin{eqnarray}
\nabla \cdot u_0 &=& 0,\\
\norm{\c}, \normm{u_0}_{L^{p}}, \normm{\nabla u_0}_{L^p}, \normm{u_0}_{L^{p}}, \normm{\nabla u_0}_{L^p} &<& M, \\
\normm{R_0}_{L^2_{x,v}} + \e^{-1}\normm{\ipc R_0}_{L^2_{x,v}} + \normm{\pt_t R_0}_{L^2_{x,v}} + \e^{1/2} \normm{\omega R_0}_{L^\infty_{x,v}}  &<& M,
\end{eqnarray}
where $\omega(v) = e^{\b \norm{v}^2}$ with $0 < \b \ll 1$ and $p \in [1, \infty]$.
For any $T_0>0$ if the solution of the Navier-Stokes exist until time $T_0$ and 
\begin{align}
\normm{u(t)}_{L^{p}} \lesssim (\normm{u_0}_{L^1} +\normm{u_0}_{L^\infty}),
\end{align}
holds for $0\le t\le T_0$ and $p \in [1,\infty]$ .

Then there exist $\e(T_0, M) > 0$ such that
the problem \eqref{thm1eq} has a unique solution in $t \in [0,T_0]$ and it can be represented as
\begin{eqnarray}
F= \mu + \e \sqrt{\mu}(f_1 +\e f_2+ \e^{1/2}R),
\end{eqnarray}
with $f_1$ and $f_2$ given by \eqref{deff1} and \eqref{deff2}.
\end{theorem}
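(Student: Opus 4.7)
The plan is to run a continuity/bootstrap argument on the norm
\[
\mathcal N(t):=\normm{R(t)}_{L^2_{x,v}}+\e^{-1}\normm{\ipc R(t)}_{\nu}+\normm{\pt_t R(t)}_{L^2_{x,v}}+\e^{1/2}\normm{\o R(t)}_{L^\infty_{x,v}},
\]
assuming an a priori bound $\mathcal N(t)\le 2M$ on a maximal interval $[0,T_*]\subset[0,T_0]$ and improving it to $\le M$ for $\e$ small enough. The source $h$ in \eq{BoltzR} is $O(1)$ in $\e$ thanks to the Navier--Stokes hypothesis $\normm{u(t)}_{L^p}\lesssim\normm{u_0}_{L^1}+\normm{u_0}_{L^\infty}$ and the construction of $f_1,f_2$; the boundary datum $r$ is controlled by \eq{eq27}; the $\e^{1/2}$ prefactor on $\G(R,R)$ and the smallness of $f_1+\e f_2$ in $\tilde L(R)$ allow these to be absorbed once $\mathcal N$ is under control.

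The first step would be the weighted $L^2$ energy estimate. Testing \eq{BoltzR} against $R$ and using the coercivity $\inn{L_\c R,R}\gtrsim \normm{\ipc R}_\nu^2$ produces $\e^{-2}$-dissipation on the microscopic part, while the boundary contribution reduces, via the orthogonality of the diffuse reflection operator $\mathscr P^\c_\g$, to a term controlled by $\e^{1/2}\norm{r}_{2,+}$. The macroscopic component $\P_\c R$ has no direct coercivity, so I would recover it by the weak-formulation/test-function method: construct divergence-free, Maxwellian-weighted test functions whose hydrodynamic moments equal those of $\P_\c R$ and whose velocity averages solve auxiliary exterior-domain elliptic/Stokes problems, in the spirit of Esposito--Guo--Kim--Marra. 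Differentiating \eq{BoltzR} in $t$ and repeating the argument gives the bound on $\pt_t R$.

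The second step is to upgrade the $L^2$ bound to the $L^3$--$L^6$ framework advertised in the abstract. The Sobolev embedding $\dot H^1(\Omega^c)\hookrightarrow L^6(\Omega^c)$, applied to the elliptic problems satisfied by the velocity averages of $\P_\c R$, produces an $L^6_x$ bound on the hydrodynamic moments; interpolation against the $L^2$ bound then yields the corresponding $L^3_x$ control. Feeding these into the Duhamel representation of \eq{BoltzR} along the backward characteristics of $v\cd\nabla_x$, with the diffuse reflection treated through the standard stochastic-cycle expansion, closes the weighted $L^\infty$ estimate on $\o R$ after using the compactness of $K$ against the $L^3$--$L^6$ bounds.

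The main obstacle is precisely this last step in the unbounded geometry. In a bounded domain the Guo $L^2$--$L^\infty$ argument closes because almost every characteristic returns to $\pt\Omega$ in a controlled time, at which point the smoothing of the diffuse reflection converts $L^2$ hydrodynamic information into an $L^\infty$ bound on $R$; in $\Omega^c$ characteristics can escape to infinity with positive probability, so that gain is unavailable and the $L^\infty$ closure must rely instead on integrating $K$ against the stronger $L^3$--$L^6$ hydrodynamic bounds, together with careful use of the shifted Maxwellian $\mu_\c$ to absorb the constant drift $\e\mathbf{u}$ at infinity. Once $\mathcal N(T_*)\le M$ is established, a continuation argument extends the estimate to $[0,T_0]$ for $\e<\e(T_0,M)$, and uniqueness follows from the same $L^2$ energy inequality applied to the difference of two solutions.
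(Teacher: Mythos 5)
Your proposal rests on the same analytic core as the paper's proof: an $L^2$ energy estimate with $\e^{-2}$ microscopic dissipation (also applied to $\pt_t R$), recovery of the macroscopic part $\P_\c R$ through an $L^6_x$ bound obtained by the exterior test-function method, an $L^3_x$ bound on the hydrodynamic moments, and a weighted $L^\infty$ estimate fed back by the $L^3$--$L^6$ information, with the forcing $h$ controlled through the exterior Navier--Stokes theory (Oseen semigroup, the stationary profile $w$) and the boundary datum $r$ through \eqref{eq27}. Two packaging differences are worth noting. First, you obtain the $L^3$ control by interpolating the $L^2$ and $L^6$ bounds of $\P_\c R$, whereas the paper invokes a kinetic averaging statement (Proposition \ref{l3estimate}, the $S_1/S_2$ decomposition) giving $\normm{S_1 f}_{L^2_tL^3_x}$ directly from $L^2$ quantities and the dissipation; since in the final inequalities these $L^3$ norms only enter with an $\e$ prefactor, your interpolation route should also close, but you should check that the constants it produces are of the same $Me^{MT_0}$ type that the smallness of $\e(T_0,M)$ is designed to beat. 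Second, you re-derive the $L^\infty$ and $L^6$ estimates by Duhamel/stochastic cycles and by elliptic estimates in $\Omega^c$, which is exactly what the cited Propositions \ref{linfestimate} and \ref{l6estimate} encapsulate, so this is a matter of exposition rather than substance.

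The genuine gap is existence. Your scheme is a continuity/bootstrap argument on a solution that is assumed to exist on a maximal interval $[0,T_*]$; this yields a priori bounds and, via the difference estimate, uniqueness, but it does not by itself produce the solution $R$ of \eqref{BoltzR}, and the theorem asserts existence on $[0,T_0]$. The paper closes this by iterating the linear problem \eqref{lineareq}: $f^{l+1}$ solves the linear equation with source $h+\tilde{L}(f^{l})+\e^{1/2}\G(f^{l},f^{l})$ and boundary datum $\mathscr{P}^{\c}_{\g}f^{l+1}+\e^{1/2}r$, the four estimates above are applied to each iterate to show that a set $H$ of functions obeying Gronwall-type bounds $M\exp(Mt)$ is preserved, and the map $f^l\mapsto f^{l+1}$ is a contraction once $\e\max\{c_i\}^2M\exp(MT_0)\ll1$; the fixed point gives existence and uniqueness simultaneously. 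To complete your argument you must either insert such an approximation/iteration scheme (the solvability of the linear problem at fixed $\e$ being the standard input) or otherwise justify local-in-time existence for the remainder equation with the singular $\e^{-1}L_\c$ term before the continuation step; as written, the bootstrap has nothing to bootstrap on.
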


\begin{remark}
Note that we can set $T_0$ to be any large number. For small initial data, it is known that the Navier-Stokes equation has a global solution. In this case, we can guarantee that the limiting problem holds for any time. For large initial data, the global existence of the Navier-Stokes equation is a well-known open problem. However, we can still prove that the hydrodynamic limit holds at least for the lifespan of the fluid solution.
\end{remark}
\begin{remark}
$\pt_t R_0$ doesn't make sense in general. However, it can be obtain from the equation by $\pt_t R_0 = -v \cdot \nabla_{x} R_0 - \e^{-1}L_{\c}(R_0) + h + \tilde{L}(R_0) + \e^{1/2} \G(R_0,R_0)$.
\end{remark}
\begin{remark}
We remark that $\e^{1/2} R$ is of higher order in $L^p$ for $2 \le p \le 6$. On the other hand, $\e^{1/2} R$ can be large enough in $L^{\infty}$ sense, which makes it possible that $(F-(\mu+\e \sqrt{\mu}f_{1})) / \e$ can be arbitrarily large in $L^{\infty}$.
\end{remark}

\subsection{Historical background}
In Hilbert's sixth question, he asked for the mathematical justification of the relationship between the Boltzmann equation and fluid equations in the context of small Knudsen numbers(\cite{Hilbert1900}, \cite{Hilbert1916}, \cite{Hilbert_}).
The present work partially answer the hydrodynamic limit of Boltzmann equation. In 1971, Sone discussed the hydrodynamic limit of Boltzmann equation in the steady case(\cite{Sone1971}). Caflisch and Lachowicz obtained the Hilbert expansion of the Boltzmann equation, which relates it to the solution of the compressible Euler equations for small Knudsen numbers(\cite{Caflisch1980}, \cite{Lachowicz1987}). On the other hand, Nishida, Asano, and Ukai proved it using different methods(\cite{Nishida1978},  \cite{Asano1983}).

We can obtain the incompressible Navier-Stokes equation from the Boltzmann equation when both the Mach number and Knudsen number go to zero. In the small Mach number regime, it is natural to consider perturbations near the absolute Maxwellian.

This fact was observed by Sone during the formal expansion for the steady solution (\cite{Sone1971}, \cite{Sone1987}). Among other contributions, Masi, Esposito, and Lebowitz provided results on short-time existence in $\R^d$(\cite{Masi1989}). Bardos and Ukai proved the incompressible Navier-Stokes limit in a distributional sense using semigroup theory (\cite{Bardos1991}). Golse and Saint-Raymond derived the convergence to the Diperna-Lions renormalized solutions(\cite{Masmoudi2003}, \cite{Lions2001}). Recently Guo introduced $L^2-L^\infty$ framework to prove incompressible Navier-Stokes limit to periodic domain and bounded domain(\cite{Guo2006}, \cite{Esposito2018-apde}).

On the other hand, in the context of the exterior boundary value problem for the Boltzmann equation, Ukai and Asano studied this problem with a fixed Knudsen number(\cite{Ukai1983}, \cite{Ukai1986}, \cite{Ukai2009}), Xia also demonstrated a similar result with relaxed assumptions using the spectral method(\cite{Xia2017}). For hydrodynamic limit as the Knudsen number goes to 0, Esposito, Guo and Marra proved the existence of a steady solution in an exterior domain(\cite{Esposito2018-cmp}). 

Our main result is the justification of the hydrodynamic limit of equation \eqref{thm1eq} for a fixed time with arbitrary initial data. The main point of the work is the presence of a nonzero speed at infinity. In this paper, we apply Guo's $L^2-L^\infty$ framework(\cite{Esposito2013}) and the $L^3-L^6$ framework(\cite{Esposito2018-apde}) to an unbounded domain. By doing this, we can obtain the hydrodynamic limit up to the lifespan of the Navier-Stokes equation.

\section{Linear Estimate}

\subsection{Energy estimate}
In this section, we will prove the existence and the energy estimate for the following linear problem:
\begin{equation} \label{lineareq}
\left\{ 
\begin{array}{l l}
\e \pt_t f + v \cdot \nabla f + \e^{-1}L_{\c} f =g &\quad (x,v) \in \Omega^c, \\
f = \mathscr{P}^{\c}_{\g}f + \e^{1/2} r & \quad (x,v) \in \g_{-}, \\
\lim_{\norm{x} \to \infty} f = 0.
\end{array}
\right.
\end{equation}
The following lemmas came from the \cite{Esposito2018-cmp}.

\begin{lemma}
{\label{green}} Assume that $f(x,v), \ h(x,v)\in
L^p(\Omega^c \times \R^3)$, $p \ge 2$ and $v \cdot \nabla_x f, v \cdot \nabla_x h  \in L^{\frac{p}{p-1}}(\Omega^c \times \R^3)$ and $f
\big|_{\g}, h\big|_{\g}\in L^2(\pt \Omega \times \R^3)$.
Then
\begin{eqnarray}
\iint_{\Omega^c \times \R^3} [(v \cdot \nabla_x h) f + (v \cdot \nabla_x f) h] \dd x \dd v = \int_{\g_+} f h \dd \g - \int_{\g_-} f h \dd \g.  \label{steadyGreen}
\end{eqnarray}
\end{lemma}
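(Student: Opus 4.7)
The strategy is to verify the Green's identity first for a dense class of smooth test functions and then extend to the hypotheses by a standard approximation argument. For $f, h \in C_c^\infty(\overline{\Omega^c}\times\R^3)$, the product rule gives $v\cdot\nabla_x(fh) = (v\cdot\nabla_x f)h + f(v\cdot\nabla_x h)$; integrating over $\Omega^c \times \R^3$, using Fubini to freeze $v$, and applying the divergence theorem in $x$ (the outer normal to $\Omega^c$ is $n(x)$, the inner normal to $\Omega$) produces
\[
\iint_{\Omega^c \times \R^3} [(v\cdot\nabla_x f)h + f(v\cdot\nabla_x h)] \dd x \dd v \;=\; \int_{\pt\Omega \times \R^3} (v\cdot n(x))\, f h \dd S \dd v.
\]
Splitting the right side according to the sign of $v\cdot n(x)$ and recalling that $\dd\g = |n\cdot v|\dd S\dd v$ reproduces $\int_{\g_+} fh \dd\g - \int_{\g_-} fh \dd\g$, as required.

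For general $f, h$ satisfying the hypotheses, I would produce smooth approximants $f_k, h_k \in C_c^\infty(\overline{\Omega^c}\times\R^3)$ such that $f_k\to f$ in $L^p$, $v\cdot\nabla_x f_k \to v\cdot\nabla_x f$ in $L^{p/(p-1)}$, and $f_k|_\g \to f|_\g$ in $L^2(\g)$, and symmetrically for $h_k$. The standard recipe is to combine a spatial cutoff $\chi_R(x)$ supported in $|x|\le 2R$, a velocity cutoff $\zeta_M(v)$ supported in $|v|\le 2M$, and a mollification in $x$ that is either tangential near $\pt\Omega$ or carried out after reflection across the boundary. Once the smooth identity applies to $(f_k, h_k)$, I would pass to the limit using H\"older's inequality with conjugate exponents $p$ and $p/(p-1)$ in the bulk, and Cauchy-Schwarz using the assumed $L^2(\g)$ trace control on the boundary.

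The delicate point is the density argument. The spatial cutoff commutator $v\cdot\nabla_x\chi_R$ grows linearly in $|v|$, so smallness of $f \cdot (v\cdot\nabla_x\chi_R)$ in $L^{p/(p-1)}$ on the annulus $\{R\le |x|\le 2R\}$ must be extracted jointly from the $L^p$-integrability of $f$ and the $L^{p/(p-1)}$-integrability of $v\cdot\nabla_x f$; a weighted trace inequality or direct interpolation on the annulus is needed here. The mollification step requires a Friedrichs-type commutator lemma for the transport operator $v\cdot\nabla_x$, together with a boundary-compatible mollifier that preserves the $L^2(\g)$ traces in the limit. Both points are standard kinetic-theory technology, analogous to DiPerna-Lions renormalization arguments, and their adaptation to the unbounded exterior $\Omega^c$ is carried out in \cite{Esposito2018-cmp}, from which the present lemma is cited.
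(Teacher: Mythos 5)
Your proposal is correct and is essentially the argument behind the result as it appears in the literature: the paper itself gives no proof of Lemma \ref{green}, citing Lemma 2.1 of \cite{Esposito2018-cmp}, whose proof is the same smooth-case computation (product rule, divergence theorem in $x$ with $n$ the outer normal of $\Omega^c$, splitting the boundary term by the sign of $v\cdot n$) followed by an approximation/trace argument of the kind you outline, and your sign conventions are right. The one step you flag but do not resolve --- the cutoff commutator $v\cdot\nabla_x\chi_R$, equivalently the flux through the sphere $\{\norm{x}=R\}$, which carries a factor of $\norm{v}$ that is not directly controlled by the stated $L^p$ and $L^{p/(p-1)}$ hypotheses --- is the only place where the appeal to ``standard technology'' hides genuine work (one needs either a subsequence $R_k\to\infty$ along which the flux vanishes or extra velocity decay); in the present application $f$ and $h$ carry Gaussian-type velocity weights, so this is harmless and consistent with the setting of the cited lemma.
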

\begin{proof}
This is Lemma 2.1 of \cite{Esposito2018-cmp}.
\end{proof}

\begin{lemma}
\label{trace_s}
Assume $\Omega_1$ is an open bounded subset of $\R^{3}$
with $\pt (\Omega_1 \backslash \overline{\Omega})$ in $C^{2}$, such that $\{x\in \O^c\, |\, d(x,\O)\le 1\}\subset \O_1$. We define 
\begin{equation}  \label{non_grazing}
\gamma_{\pm}^{\delta} : = \{ (x,v) \in \gamma_{\pm} : \norm{n(x)\cdot v}  >
\delta, \ \ \delta\leq |v| \leq \frac{1}{\delta} \}.
\end{equation}
Then 
\begin{equation*}
\norm{f\mathbf{1}_{\gamma_{\pm}^{\delta}}}_{1,\pm} \lesssim_{\delta, \Omega_1} \normm{f}_{L^1(\Omega_1\backslash\Omega)} + \normm{ \pt_t f+ v\cdot \nabla_{x} f}_{L^1(\Omega_1 \backslash \Omega)} .
\end{equation*}
\end{lemma}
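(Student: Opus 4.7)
The estimate is a standard non-grazing trace inequality for the kinetic transport operator $D := \partial_t + v\cdot\nabla_x$. I will prove it by the classical characteristics method: represent $f$ at a boundary point by integrating $D f$ backward along a short characteristic segment, and then apply the tube-change of variables that absorbs the boundary weight $|n(x)\cdot v|$ into the Lebesgue Jacobian.

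\textbf{Geometric setup and representation.} On $\gamma_{+}^{\delta}$ one has $|n(x)\cdot v|>\delta$ and $\delta\le|v|\le\delta^{-1}$. Using $\partial\Omega\in C^{2}$ and the Taylor expansion
\[
\xi(x-sv)=s\,|\nabla\xi(x)|\,(n(x)\cdot v)+O(s^{2}|v|^{2}),
\]
I fix $\tau_{0}=\tau_{0}(\delta,\Omega)>0$ small enough that the backward trajectory $s\mapsto x-sv$ stays in $\Omega^{c}$ for all $s\in(0,\tau_{0}]$ and satisfies $\tau_{0}|v|\le 1$, so by the hypothesis $\{d(\cdot,\Omega)\le 1\}\subset\Omega_{1}$ the entire segment lies in $\Omega_{1}\setminus\Omega$. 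Choose $\chi\in C^{\infty}([0,\tau_{0}])$ with $\chi(0)=1$ and $\chi(\tau_{0})=0$; applying the fundamental theorem of calculus to $s\mapsto\chi(s)\,f(t-s,x-sv,v)$ yields, for every $(x,v)\in\gamma_{+}^{\delta}$,
\[
f(t,x,v)=\int_{0}^{\tau_{0}}\chi(s)\,Df(t-s,x-sv,v)\,ds-\int_{0}^{\tau_{0}}\chi'(s)\,f(t-s,x-sv,v)\,ds.
\]

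\textbf{Change of variables and main obstacle.} Taking absolute values of this identity, multiplying by $|n(x)\cdot v|$, and integrating $(t,x,v)$ over $[0,T]\times\gamma_{+}^{\delta}$, I move the $s$-integral outside and, for each fixed $v$, apply the classical tube-change $(x,s)\mapsto y=x-sv$ from $\partial\Omega\times[0,\tau_{0}]$ into $\R^{3}$, whose Jacobian is precisely $|n(x)\cdot v|\,d\sigma(x)\,ds=dy$. This kills the boundary weight and, by the geometric step, confines the image to $\Omega_{1}\setminus\Omega$; the two contributions are then majorized by $C(\delta,\chi)\bigl(\normm{f}_{L^{1}}+\normm{Df}_{L^{1}}\bigr)$ on $[0,T]\times(\Omega_{1}\setminus\Omega)\times\R^{3}$, yielding the claim. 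The $\gamma_{-}^{\delta}$ case is identical using forward characteristics $s\mapsto x+sv$. The main (in fact the only nontrivial) obstacle is the geometric step: verifying uniformly over the non-grazing regime that the backward trajectory neither re-enters $\Omega$ (controlled by $n(x)\cdot v>\delta$) nor leaves the unit tubular neighborhood (controlled by $|v|\le\delta^{-1}$); once a uniform $\tau_{0}(\delta)>0$ is secured the rest is mechanical.
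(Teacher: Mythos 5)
Your strategy is the standard one: the paper itself gives no argument (it cites Lemma 2.2 of Esposito--Guo--Marra, CMP 2018, which is the steady version of this trace estimate, with $v\cdot\nabla_x f$ only on the right), and the characteristics-plus-tube-change argument you outline is exactly the proof behind that cited lemma. Your geometric step is correctly identified and correctly handled: transversality $\norm{n(x)\cdot v}>\delta$ plus the $C^2$ Taylor expansion of $\xi$ gives a uniform $\tau_0(\delta,\Omega)>0$ for which the segment stays in $\O^c$, and $\tau_0\norm{v}\le 1$ keeps it in the unit neighborhood, hence in $\O_1\setminus\O$; the Jacobian identity $\norm{n(x)\cdot v}\,\dd\sigma(x)\,\dd s=\dd y$ is the right one (you should add one line on injectivity, or bounded multiplicity, of $(x,s)\mapsto x-sv$ for $\tau_0$ small, but that is routine).

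The genuine gap is in the temporal bookkeeping, which you dismiss as mechanical. For $(x,v)\in\gamma_+^\delta$ the only characteristic direction of $\pt_t+v\cdot\nabla_x$ that moves spatially into the gas is backward in time, so your representation uses $f(t-s,x-sv,v)$ for $0\le s\le\tau_0$, which is undefined (or uncontrolled by the right-hand side) when $t<\tau_0$; symmetrically, the $\g_-^\delta$ case uses $f$ at times beyond the final time. This is not a removable technicality: the inequality with only $\normm{f}_{L^1}$ and $\normm{(\pt_t+v\cdot\nabla_x)f}_{L^1}$ on the right, whether read at fixed $t$ or integrated in $t$, is false. Take $(\pt_t+v\cdot\nabla_x)f=0$ with initial datum a shell of thickness $\eta$ and amplitude $A$ adjacent to $\pt\O$, with non-grazing velocities pointing toward the obstacle: the boundary trace accumulates the full mass, of order $A\eta$, during a time of order $\eta$, while the time-integrated interior $L^1$ norm is only of order $A\eta^{2}$ (and at fixed small $t$ the trace is of order $A$ versus an interior norm of order $A\eta$). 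The correct unsteady trace lemma (Guo, ARMA 2010; Esposito--Guo--Kim--Marra 2013), which is what this paper's application actually needs, carries an additional initial-time term $\normm{f(0)}_{L^1_{x,v}}$ (or a restriction of the time window), and your silent use of $f$ at times $t-s<0$ is precisely where that term is hiding. So either add the initial/final-time contribution to your conclusion (noting the paper's statement omits it), or prove the time-independent version with $v\cdot\nabla_x f$ alone, which is what the cited steady lemma asserts; as written, the final majorization step does not go through.
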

\begin{proof}
This is Lemma 2.2 of \cite{Esposito2018-cmp}.
\end{proof}
\begin{remark}
According to \cite{Esposito2013}, $\norm{\mathscr{P}^{\c}_{\g} f}_{2,\pm} \lesssim_{\d} \norm{\mathscr{P}^{\c}_{\g} f \1_{\g_{\pm}^\delta}}_{2,\pm}$ from previous lemma applied to $f^2$ we get
\begin{eqnarray*}
\norm{\mathscr{P}^{\c}_{\g} f}_{2,\pm}^2 
&\lesssim_{\delta} &\normm{f}_{L^2(\Omega_1\backslash\Omega)}^2 + \normm{ \pt_t f^2+ v\cdot \nabla_{x} f^2}_{L^1(\Omega_1 \backslash \Omega)} \\
&\lesssim & \normm{f}_{L^2(\Omega_1\backslash\Omega)}^2 + \normm{ fg}_{L^1(\Omega_1 \backslash \Omega)} + \normm{\e^{-1} f L_{\c} f}_{L^1(\Omega_1 \backslash \Omega)}.
\end{eqnarray*}
\end{remark}

\begin{lemma} \label{boundary1}
\begin{eqnarray} \label{ineq1}
\norm{\int_{\pt \Omega} \int_{\{v\cdot n>0\}} v\cdot n \norm{\mathscr{P}^{\c}_{\g} f}^2 \dd v \dd S -\int_{\pt \Omega} \int_{\{v\cdot n<0\}} \norm{v\cdot n} \norm{\mathscr{P}^{\c}_{\g} f}^2 \dd v \dd S} \lesssim \e\norm{\c}\int_{\g_+}\norm{f}^2\dd \g.
\end{eqnarray}
\begin{eqnarray}
\norm{\int_{\pt\Omega} \int_{\{v \cdot n > 0 \}} v \cdot n \mathscr{P}^{\c}_{\g} f (1-\mathscr{P}^{\c}_{\g}) f \dd v \dd S } \lesssim  \e \norm{\c} \int_{\g_+} \norm{f}^2\dd \g.\label{ineq2}
\end{eqnarray}
\end{lemma}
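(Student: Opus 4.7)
The plan is to exploit the explicit representation
\begin{equation*}
\mathscr{P}^{\c}_{\g} f(t,x,v) = \sqrt{2\pi}\,\frac{\mu(v)}{\sqrt{\mu_{\c}(v)}}\, z_{\g_+}(f)(t,x),
\end{equation*}
which factorizes the velocity and spatial dependence and turns each boundary integral in \eqref{ineq1}--\eqref{ineq2} into $z_{\g_+}(f)(x)$ (or its square) multiplied by a purely Gaussian $v$-integral. The crucial observation is that both inequalities become equalities with zero right-hand side when $\c=0$: the first by the evenness of $\mu$ in $v$, and the second because $\mathscr{P}^{w}_{\g}$ is an orthogonal projection in the $|v\cdot n|\,dv$ pairing on outgoing profiles. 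Thus both left-hand sides measure the defect from replacing $\mu$ by $\mu_{\c}$, and the identity $\mu/\mu_{\c} = \exp(-\e\, v\cdot\c + \tfrac12 \e^2|\c|^2)$ makes it clear that this defect should be $O(\e|\c|)$.

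For \eqref{ineq1}, the substitution reduces the integrand on $\pt\Omega$ to $2\pi |z_{\g_+}(f)(x)|^2\, \Delta(x)$, where
\begin{equation*}
\Delta(x) := \int_{v\cdot n>0} (v\cdot n)\,\frac{\mu^2}{\mu_{\c}}\,dv - \int_{v\cdot n<0} |v\cdot n|\,\frac{\mu^2}{\mu_{\c}}\,dv.
\end{equation*}
Taylor expanding $\mu/\mu_{\c}$ to first order in $\e$, the zeroth-order pieces cancel by symmetry of $\mu$, and the first-order contributions combine across the two half-spaces to $-\e\int_{\R^3} (v\cdot n)(v\cdot \c)\,\mu\, dv = -\e\, (n\cdot \c)$, with an $O(\e^2|\c|^2)$ remainder controlled by Gaussian moment bounds. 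Hence $|\Delta(x)|\lesssim \e|\c|$ uniformly in $x$. Cauchy--Schwarz applied to the definition of $z_{\g_+}$ gives $|z_{\g_+}(f)(x)|^2 \lesssim \int_{v\cdot n>0}|v\cdot n|\,|f|^2\,dv$, and integrating over $\pt\Omega$ yields \eqref{ineq1}.

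For \eqref{ineq2}, the same substitution collapses the integrand to
\begin{equation*}
\sqrt{2\pi}\, z_{\g_+}(f)(x)\int_{v\cdot n>0}(v\cdot n)\,\sqrt{\mu_{\c}}\Bigl[\frac{\mu}{\mu_{\c}} - \sqrt{2\pi}\, B(x)\Bigr]\, f\, dv,\qquad B(x):=\int_{v\cdot n>0}(v\cdot n)\,\frac{\mu^2}{\mu_{\c}}\,dv.
\end{equation*}
At $\c=0$ one has $\mu/\mu_{\c}=1$ and $\sqrt{2\pi}\,B=1$, so the bracket vanishes identically --- this is precisely the orthogonality of $\mathscr{P}^{w}_{\g}$. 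A short Taylor expansion then shows that the bracket is bounded pointwise by $\e|\c|(1+|v|)$. Two Cauchy--Schwarz steps --- one absorbing the $(1+|v|)$ weight against $\sqrt{\mu_{\c}}$ and producing a factor $\lesssim \e|\c|\bigl(\int_{v\cdot n>0}|v\cdot n|\,|f|^2\, dv\bigr)^{1/2}$, and another bounding $|z_{\g_+}(f)(x)|$ by the same square root --- lead to an integrand controlled by $\e|\c|\int_{v\cdot n>0}|v\cdot n|\,|f|^2\,dv$, from which integration over $\pt\Omega$ yields \eqref{ineq2}. The main difficulty is essentially bookkeeping: one must verify exact cancellation at $\c=0$, then extract the $O(\e|\c|)$ coefficient uniformly in $x$ while carrying the polynomial-in-$v$ tail through the Cauchy--Schwarz estimates.
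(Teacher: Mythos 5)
Your argument is correct. The paper itself gives no proof of this lemma beyond citing Lemma 2.4 of \cite{Esposito2018-cmp}, and your computation is exactly the natural (and, in substance, the cited reference's) route: the rank-one structure $\mathscr{P}^{\c}_{\g} f=\sqrt{2\pi}\,\frac{\mu}{\sqrt{\mu_{\c}}}\,z_{\g_{+}}(f)$ factors out $z_{\g_{+}}(f)$, both left-hand sides vanish identically at $\c=0$ (evenness of $\mu$ for \eqref{ineq1}, the projection identity in the $\norm{v\cdot n}\,\dd v$ pairing for \eqref{ineq2}), and the deviation is read off from $\mu/\mu_{\c}=\exp(-\e\, v\cdot\c+\tfrac12\e^{2}\norm{\c}^{2})$, followed by Cauchy--Schwarz to bound $\norm{z_{\g_{+}}(f)}$ by the outgoing $L^{2}$ flux. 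The only point worth making explicit is that absorbing the quadratic remainder $O(\e^{2}\norm{\c}^{2})$ (and the factor $e^{\e\norm{\c}\norm{v}}$ against the Gaussian) into the $\e\norm{\c}$ bound uses $\e\norm{\c}\lesssim 1$, which is indeed the regime of the paper ($\norm{\c}\le M$ fixed, $\e$ small).
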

\begin{proof}
This is Lemma 2.4 of \cite{Esposito2018-cmp}.
\end{proof}

\begin{lemma}\label{boundary2} 
For any $\eta>0$,  
\begin{eqnarray} \label{ineq3}
\norm{\int_{\pt\O} \int_{ \{ v \cdot n < 0 \}}  \norm{v \cdot n} \e^{1/2} r  \mathscr{P}^{\c}_{\g} f \dd v \dd S} \lesssim \frac 1{\eta} \normm{z_\g(r)}_2^2+\e\eta \norm{f}_{2,+}^2+ \e^{3/2} \norm{r}_{2,-}^2
.
\end{eqnarray}
\end{lemma}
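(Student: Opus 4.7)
The plan is to exploit the explicit structure of $\mathscr{P}^{\c}_{\g}$ on $\g_-$, where it factorises into an $x$-only factor times a fixed $v$-profile:
\[
\mathscr{P}^{\c}_{\g}f(x,v)=\sqrt{2\pi}\,\frac{\mu(v)}{\sqrt{\mu_{\c}(v)}}\,z_{\g_+}(f)(x).
\]
Substituting this into the integral on the left-hand side of \eqref{ineq3}, I pull $z_{\g_+}(f)(x)$ outside the velocity integral and rewrite the quantity as
\[
\mathcal I:=\sqrt{2\pi}\,\e^{1/2}\int_{\pt\O}z_{\g_+}(f)(x)\,J(x)\,\dd S,\qquad J(x):=\int_{v\cd n<0}|v\cd n|\,r(x,v)\,\frac{\mu(v)}{\sqrt{\mu_{\c}(v)}}\,\dd v.
\]

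The next step is to compare $J$ with $z_{\g_-}(r)$, which appears on the right-hand side of \eqref{ineq3}. Writing $\mu/\sqrt{\mu_\c}=\sqrt{\mu_\c}\,(\mu/\mu_\c)$ and expanding
\[
\frac{\mu(v)}{\mu_{\c}(v)}=\exp\!\bigl(-\e\, v\cd\mathbf{u}+\tfrac{\e^2}{2}|\mathbf{u}|^2\bigr)=1+\e\,\psi_\e(v),
\]
with $\psi_\e$ Gaussian-decaying uniformly for small $\e$, I obtain the decomposition $J(x)=z_{\g_-}(r)(x)+\e\,E(x)$, where $E$ is the pairing of $r$ with $\sqrt{\mu_\c}\,\psi_\e$. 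A Cauchy--Schwarz in $v$, absorbed by the Gaussian weight, yields $|E(x)|^2\lesssim \int_{v\cd n<0}|v\cd n|\,r(x,v)^2\,\dd v$, hence $\normm{E}_{L^2(\pt\O)}\lesssim\norm{r}_{2,-}$. The same trick gives $\normm{z_{\g_+}(f)}_{L^2(\pt\O)}^2\lesssim\norm{f}_{2,+}^2$.

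Combining these estimates with Cauchy--Schwarz on $\pt\O$ gives
\[
|\mathcal I|\lesssim \e^{1/2}\,\norm{f}_{2,+}\bigl(\normm{z_{\g_-}(r)}_2+\e\,\norm{r}_{2,-}\bigr),
\]
and two applications of Young's inequality finish the proof: writing the first product as $(\sqrt{\e\eta}\,\norm{f}_{2,+})\cdot(\eta^{-1/2}\normm{z_{\g_-}(r)}_2)$ produces $\e\eta\,\norm{f}_{2,+}^2+\eta^{-1}\normm{z_{\g_-}(r)}_2^2$, and splitting $\e^{3/2}\norm{f}_{2,+}\norm{r}_{2,-}\le\tfrac12\bigl(\e^{3/2}\norm{f}_{2,+}^2+\e^{3/2}\norm{r}_{2,-}^2\bigr)$ yields the $\e^{3/2}\norm{r}_{2,-}^2$ term, with the stray $\e^{3/2}\norm{f}_{2,+}^2$ absorbed into $\e\eta\norm{f}_{2,+}^2$ once $\e$ is small relative to $\eta$.

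I expect the main obstacle to be the Maxwellian-mismatch step: one must verify that $\mu/\mu_\c-1$ really behaves as a genuine $O(\e)$ perturbation, with a $v$-profile that remains Gaussian-integrable over the full incoming set $\{v\cd n<0\}$, so that the resulting error contributes at the level $\e^{3/2}\norm{r}_{2,-}^2$ and not an $O(1)$ loss that would be incompatible with \eqref{ineq3}.
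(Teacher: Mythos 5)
Your argument is correct, and it is essentially the expected one: the paper itself offers no proof of this lemma (it only cites Lemma 2.5 of Esposito--Guo--Marra), and the natural proof of that statement is exactly your route --- factor $\mathscr{P}^{\c}_{\g}f=\sqrt{2\pi}\,\mu\,\mu_{\c}^{-1/2}\,z_{\g_+}(f)$, replace $\mu/\sqrt{\mu_{\c}}$ by $\sqrt{\mu_{\c}}$ up to an $O(\e)$ Gaussian-weighted error so that the inner $v$-integral becomes $z_{\g_-}(r)+\e E$ with $\normm{E}_{L^2(\pt\O)}\lesssim\norm{r}_{2,-}$, then Cauchy--Schwarz on $\pt\O$ and Young's inequality with weight $\eta$ on the main term. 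Two remarks. First, the phrase ``$\psi_\e$ Gaussian-decaying'' is not literally accurate: $\psi_\e(v)$ itself grows like $(1+\norm{v})e^{\e\norm{\mathbf{u}}\norm{v}}$; what your Cauchy--Schwarz step actually needs, and what is true for $\e\norm{\mathbf{u}}\lesssim 1$, is that $\int_{\{v\cdot n<0\}}\norm{v\cdot n}\,\mu_{\c}\,\psi_\e^{2}\,\dd v\lesssim 1$, i.e. the decay of $\sqrt{\mu_{\c}}\,\psi_\e$. Second, the absorption of the stray $\e^{3/2}\norm{f}_{2,+}^2$ into $\e\eta\norm{f}_{2,+}^2$ requires $\e^{1/2}\lesssim\eta$, and this restriction is not an artifact of your bookkeeping: the error $E$ is essentially a first moment of $r$ and cannot be dominated by $z_\g(r)$, so the cross term forces a companion $\norm{f}_{2,+}^2$ contribution of size about $\e^{3/2}$ (or $\e^{2}\eta^{-1}$ if one Youngs the other way), and matching it against $\e\eta\norm{f}_{2,+}^2$ with an $(\e,\eta)$-independent constant is only possible when $\eta\gtrsim\e^{1/2}$; for $\eta\ll\e^{1/2}$ one can arrange $z_\g(r)=0$ while the left side is of order $\e^{3/2}\norm{f}_{2,+}\norm{r}_{2,-}$, so the blanket ``for any $\eta>0$'' should be read with $\e\lesssim\eta^{2}$. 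This is harmless for the paper, since the lemma is invoked in Proposition \ref{energy} with $\eta=\norm{\c}$ fixed and $\e\to 0$, but it is worth stating the restriction explicitly rather than leaving it as ``$\e$ small relative to $\eta$.''
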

\begin{proof}
This is Lemma 2.5 of \cite{Esposito2018-cmp}.
\end{proof}

For fixed $\e$, the construction of the solution to the linear problem \eqref{lineareq} is standard (see e.g., \cite{Maslova1993}). The following proposition provides the energy estimate.
\begin{proposition}\label{energy} 
The solution to \eqref{lineareq}  
 satisfies the inequality
\begin{eqnarray}
\pt_t \normm{f}_2^2+\e^{-2} \normm{\ipc f}_\nu^2 +\e^{-1}\norm{(1-\mathscr{P}^{\c}_{\g})f}^2_{2,+} \lesssim  \normm{\nu^{-\frac 1 2}\ipc g}_2^2 + (1+\e^{\frac 1 2})\norm{r}_{2,-}^2 \\
+(\e\norm{\c})^{-1}\normm{z_\g(r)}_2^2 + \e^{-1} \normm{\P_{\c}g \P_{\c}f}_{1} + \norm{\c} \normm{\P_{\c} f}^2_{L^2(\O_1 \backslash \O)}.
\end{eqnarray}
\end{proposition}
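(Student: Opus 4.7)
The plan is the standard $L^2$ energy identity adapted from the steady case in \cite{Esposito2018-cmp}. First I would multiply \eqref{lineareq} by $f$ and integrate over $\O^c\times\R^3$. The time-derivative term gives $\frac{\e}{2}\pt_t\normm{f}_2^2$; the transport term, by Lemma~\ref{green} with $h=f$, gives the boundary contribution $\frac12(\norm{f}_{2,+}^2-\norm{f}_{2,-}^2)$; and the collision term is bounded below by the spectral gap $\inn{L_{\c}f,f}\gtrsim \normm{\ipc f}_\nu^2$. Dividing by $\e$ produces, schematically,
\begin{equation*}
\tfrac12\pt_t\normm{f}_2^2+\e^{-2}\normm{\ipc f}_\nu^2+\tfrac12\e^{-1}\big(\norm{f}_{2,+}^2-\norm{f}_{2,-}^2\big)\lesssim \e^{-1}\inn{g,f}.
\end{equation*}

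To treat the source term I split $\inn{g,f}=\inn{\P_{\c}g,\P_{\c}f}+\inn{\ipc g,\ipc f}$, with the cross terms vanishing because $\P_{\c}$ is orthogonal. The first piece contributes directly $\e^{-1}\normm{\P_{\c}g\,\P_{\c}f}_1$, while Cauchy--Schwarz in the $\nu$-weighted norm followed by Young's inequality bounds the second by $C\normm{\nu^{-1/2}\ipc g}_2^2+\tfrac12\e^{-2}\normm{\ipc f}_\nu^2$, the latter absorbed by the LHS. For the boundary, I insert $f|_{\g_-}=\mathscr{P}^{\c}_{\g}f+\e^{1/2}r$ into $\norm{f}_{2,-}^2$ and decompose $f=\mathscr{P}^{\c}_{\g}f+(1-\mathscr{P}^{\c}_{\g})f$ on $\g_+$ to obtain
\begin{multline*}
\norm{f}_{2,+}^2-\norm{f}_{2,-}^2=\norm{(1-\mathscr{P}^{\c}_{\g})f}_{2,+}^2+\Big[\int_{\g_+}\norm{\mathscr{P}^{\c}_{\g}f}^2-\int_{\g_-}\norm{\mathscr{P}^{\c}_{\g}f}^2\Big]\\
+2\int_{\g_+}\mathscr{P}^{\c}_{\g}f(1-\mathscr{P}^{\c}_{\g})f-2\e^{1/2}\int_{\g_-}\mathscr{P}^{\c}_{\g}f\cdot r-\e\norm{r}_{2,-}^2.
\end{multline*}
Lemma~\ref{boundary1} bounds the bracket and the first cross integral each by $\e\norm{\c}\norm{f}_{2,+}^2$, while Lemma~\ref{boundary2} with the choice $\eta=\norm{\c}$ controls the $r\cdot\mathscr{P}^{\c}_{\g}f$ term by $\norm{\c}^{-1}\normm{z_\g(r)}_2^2+\e\norm{\c}\norm{f}_{2,+}^2+\e^{3/2}\norm{r}_{2,-}^2$. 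After division by $\e$ this produces the LHS term $\e^{-1}\norm{(1-\mathscr{P}^{\c}_{\g})f}_{2,+}^2$ and the RHS contributions $(1+\e^{1/2})\norm{r}_{2,-}^2$ and $(\e\norm{\c})^{-1}\normm{z_\g(r)}_2^2$.

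The main obstacle is the leftover error $\norm{\c}\norm{f}_{2,+}^2$ accumulated from Lemmas~\ref{boundary1}--\ref{boundary2}, which cannot simply be absorbed into the LHS because $\norm{\mathscr{P}^{\c}_{\g}f}_{2,+}$ is not bounded by $\norm{(1-\mathscr{P}^{\c}_{\g})f}_{2,+}$. Splitting $\norm{f}_{2,+}^2=\norm{\mathscr{P}^{\c}_{\g}f}_{2,+}^2+\norm{(1-\mathscr{P}^{\c}_{\g})f}_{2,+}^2$, the non-hydrodynamic half is absorbed into the LHS $\e^{-1}\norm{(1-\mathscr{P}^{\c}_{\g})f}_{2,+}^2$ for $\e$ small. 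For the hydrodynamic half, I would invoke the trace-to-bulk bound in the remark following Lemma~\ref{trace_s}, applied to $f^2$, giving
\begin{equation*}
\norm{\mathscr{P}^{\c}_{\g}f}_{2,+}^2\lesssim\normm{f}_{L^2(\O_1\backslash\O)}^2+\normm{fg}_{L^1(\O_1\backslash\O)}+\e^{-1}\normm{fL_{\c}f}_{L^1(\O_1\backslash\O)}.
\end{equation*}
Decomposing $f=\P_{\c}f+\ipc f$ in each of these terms, the $\ipc f$ contributions are dominated by $\e^{-2}\normm{\ipc f}_\nu^2$ (absorbing a factor of $\e^2$ via smallness of $\e$), the mixed $fg$ term reduces to $\e^{-1}\normm{\P_{\c}g\,\P_{\c}f}_1$ plus absorbable remainders, and the irreducible leftover is precisely $\norm{\c}\normm{\P_{\c}f}^2_{L^2(\O_1\backslash\O)}$, which appears as the last term of the stated inequality.
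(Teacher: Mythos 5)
Your proposal is correct and follows essentially the same route as the paper: the $L^2$ energy identity via Lemma~\ref{green} and the spectral gap, the hydrodynamic/non-hydrodynamic splitting of $\inn{g,f}$, the boundary decomposition handled by Lemmas~\ref{boundary1}--\ref{boundary2} with $\eta=\norm{\c}$, and the trace-to-bulk bound from the remark after Lemma~\ref{trace_s} (applied to $f^2$, using the equation) to reduce $\norm{\mathscr{P}^{\c}_{\g}f}_{2,+}^2$ to interior terms, followed by the $\P_{\c}$/$\ipc$ split leaving $\norm{\c}\normm{\P_{\c}f}^2_{L^2(\O_1\backslash\O)}$. No substantive differences from the paper's argument.
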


\begin{proof}
Multiply $f$ to the equation \eqref{lineareq} and integrate by $x$ and $v$ variable. Then we can get
\begin{eqnarray}
\e \frac{1}{2} \pt_t \normm{f}_{L^2}^2 + \frac{1}{2}\iint_{\Omega^c \times \R^3} v \cdot \nabla f^2 \dd x \dd v + \e^{-1} \iint_{\Omega^c \times \R^3} L_{\c}(f)f \dd x \dd v
= \iint_{\Omega^c \times \R^3} fg  \dd x \dd v.
\end{eqnarray}
Then, by lemma \ref{green}, we can get
\begin{eqnarray}
\frac{1}{2}\pt_t \normm{f}_2^2 +\e^{-1} \frac{1}{2} \left(\int_{\g_+} f^2 \dd \g - \int_{\g_-} f^2 \dd \g \right)+ \e^{-2} \iint_{\Omega^c \times \R^3} L_{\c}(f)f \dd x \dd v = \e^{-1} \iint_{\Omega^c \times \R^3} fg  \dd x \dd v.
\end{eqnarray}
From the semi-positivity of $L_{\c}$
\begin{eqnarray*}
\iint_{\Omega^c \times \R^3} L_{\c}(f)f \dd x \dd v \gtrsim \normm{\ipc f}_2^2.
\end{eqnarray*}
By the Holder inequality,
\begin{eqnarray}
\e^{-1} \iint_{\Omega^c \times \R^3} fg  \dd x \dd v \le \e^{-1} \normm{\ipc f}_{\nu} \normm{\nu^{-1/2} \ipc g}_{2} + \e^{-1} \normm{\P_{\c}g \P_{\c}f}_{1}.
\end{eqnarray}
From the boundary condition on $\g_{-}$ we have $f = \mathscr{P}^{\c}_{\g} f + \e^{1/2} r$. By lemma \ref{boundary2}
\begin{eqnarray}
\int_{\g_-} f^2 \dd \g = \int_{\g_-} [\mathscr{P}^{\c}_{\g} f + \e^{1/2} r]^2 \dd \g = \int_{\g_-} (\norm{\mathscr{P}^{\c}_{\g} f}^2+\e \norm{r}^2+2\e^{1/2} r \mathscr{P}^{\c}_{\g} f ) \dd \g\\ 
\lesssim \int_{\g_-} \norm{\mathscr{P}^{\c}_{\g} f}^2 \dd \g + \e\norm{r}_{2,-}^2+\e^{3/2}\norm{r}_{2,-}^2+\frac 1{\eta}\normm{z_\g(r)}_2^2+\e \eta \norm{f}_{2,+}^2 .
\end{eqnarray}
Moreover,
\begin{eqnarray}
\int_{\g_+} f^2 \dd \g = \int_{\g_+} \norm{(1-\mathscr{P}^{\c}_{\g})f }^2 \dd \g + \int_{\g_+} \norm{\mathscr{P}^{\c}_{\g} f}^2 \dd \g + 2 \int_{\g_+} \mathscr{P}^{\c}_{\g} f (1-\mathscr{P}^{\c}_{\g})f  \dd \g.
\end{eqnarray}
The last term can be controlled by lemma \ref{boundary1}. Then collect all the term and put $\eta = \norm{\c}$ then we can get
\begin{eqnarray*}
\pt_t \normm{f}_2^2+\e^{-2} \normm{\ipc f}_\nu^2 +\e^{-1}\norm{(1-\mathscr{P}^{\c}_{\g})f}^2_{2,+} &\lesssim&  \normm{\nu^{-\frac 1 2}\ipc g}_2^2 + (1+\e^{\frac 1 2})\norm{r}_{2,-}^2 \\
&&+(\e\norm{\c})^{-1}\normm{z_\g(r)}_2^2 + \e^{-1} \normm{\P_{\c}g \P_{\c}f}_{1} + \norm{\c} \norm{P^\c_{\g} f}^2_{2,+}.
\end{eqnarray*}
Then last term of the above term can be controlled by
\begin{eqnarray}
\norm{\mathscr{P}^{\c}_{\g} f}_{2,+}^2\le \normm{f}_{L^2(\O_1\backslash\O)}^2+ \e^{-1}\normm{
\ipc f}_{L^2(\O_1\backslash\O)}^2+\normm{\nu^{-\frac 1 2}
g}_{L^2(\O_1\backslash\O)}^2
\end{eqnarray}
Then we split $f$ and $g$ into $\P_{\c}$ and $\ipc$ then we can get the desired result.
\end{proof}

\subsection{$L^p$ estimate}
In this section we will introduce $L^\infty$ estimate and $L^6$ estimate.

\begin{proposition}\label{linfestimate}
($L^\infty$ estimate)

If
$$
[\e \pt_t + v \cdot \nabla_x + \e^{-1}C_0 \inn{v} ]\norm{f} \le \e^{-1} K_{\b}\norm{f} + \norm{g}
$$
$$
\norm{f|_{\gamma_{-}}} \le \mathscr{P}^{\c}_{\g} \norm{f} +\norm{r}, \norm{f|_{t=0}} \le \norm{f_0}
$$
Then, for $\omega (v) = e^{\b' \norm{v}^2}$ with $0 < \b' << \b$
\begin{eqnarray*}
\normm{\e^{1/2} \omega f(t)}_{\infty} &\lesssim& \normm{\e^{1/2} \omega f_0}_{\infty} + \sup_{0\le s\le t } \normm{\e^{1/2} \omega r(s)}_{\infty} +\e^{3/2} \sup_{0\le s\le t } \normm{ \inn{v}^{-1} \omega g(s)}_{\infty} \\
&&+\sup_{0\le s\le t } \normm{ \P_\c f(s)}_{L^6(\Omega^{c})} + \e^{-1}\sup_{0\le s\le t } \normm{ \ipc f(s)}_{L^{2}(\Omega^c \times \R^3)} \\
\normm{\e^{1/2} \omega f(t)}_{\infty} &\lesssim& \normm{\e^{1/2} \omega f_0}_{\infty} + \sup_{0\le s\le t } \normm{\e^{1/2} \omega r(s)}_{\infty} +\e^{3/2} \sup_{0\le s\le t } \normm{ \inn{v}^{-1} \omega g(s)}_{\infty} \\
&&+ \e^{-1}\sup_{0\le s\le t } \normm{ f(s)}_{L^{2}(\Omega^c \times \R^3)}.
\end{eqnarray*}
\end{proposition}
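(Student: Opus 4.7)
The plan is to represent the solution along backward characteristics and exploit the damping provided by the $\e^{-2}C_0\inn{v}$ absorption term. Dividing the differential inequality by $\e$ gives $\pt_t\norm{f}+\e^{-1}v\cdot\nabla_x\norm{f}+\e^{-2}C_0\inn{v}\norm{f}\le\e^{-2}K_\b\norm{f}+\e^{-1}\norm{g}$, so along the straightened trajectory $x-\e^{-1}(t-s)v$ the integrating factor $\exp[-C_0\inn{v}(t-s)/\e^2]$ yields a Duhamel representation. I would first multiply through by the weight $\omega(v)=e^{\b'\norm{v}^2}$ with $\b'\ll\b$; as in \cite{Esposito2013}, the conjugated operator $\omega K_\b\omega^{-1}$ still has a kernel $k_\omega(v,u)$ whose Hilbert--Schmidt--type bounds and pointwise decay are uniform.

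Next I would trace the characteristic backwards. If $(t,x,v)$ produces a trajectory that stays in $\O^c$ for all $s\in[0,t]$, the starting value is $f_0$. Otherwise it first hits $\g_-$ at time $t_1=t-\e\tb(x,v)$, where the hypothesis gives $\norm{f(t_1,x_1,v)}\le\mathscr{P}^\c_\g\norm{f}(t_1,x_1,v)+\norm{r(t_1,x_1,v)}$. The diffusive term expresses this as an integral over outgoing velocities $u$, each of which must be traced further back by the same procedure. Iterating this gives the stochastic cycle expansion of \cite{Esposito2013}: the contribution from trajectories that have bounced $k$ times is controlled by a factor of order $(1/2)^k$ coming from the probability that $k$ successive diffuse reflections accumulate in a bounded time, so only finitely many bounces are relevant, and the tail is absorbed. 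Integrating the $r$ and $g$ contributions against the damping factor and the $\e^{-2}\dd s$ measure produces the $\sup\normm{\e^{1/2}\omega r}_\infty$ and $\e^{3/2}\sup\normm{\inn{v}^{-1}\omega g}_\infty$ terms, while the initial datum contributes $\normm{\e^{1/2}\omega f_0}_\infty$.

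The main analytical step, and the reason for the $L^p$ norm on the right-hand side, is the treatment of the $K_\b\norm{f}$ source. Substituting the Duhamel representation of $\norm{f}$ into $K_\b\norm{f}(t,x,v)=\int k_\omega(v,u)\norm{f(t,x,u)}\,\dd u$ and applying the procedure once more produces a double velocity integral against a kernel that, after changing variables from $u$ to the spatial point $y=x-\e^{-1}(t-s)u$, collapses to a bounded spatial integral; a Cauchy--Schwarz in $y$ then bounds it by $\e^{-1}\normm{f}_{L^2_{x,v}}$, giving the coarser second estimate. For the sharper first estimate, I would split $f=\P_\c f+\ipc f$ inside the $K_\b$ kernel: the microscopic part $\ipc f$ is controlled in $L^2$ as before, producing $\e^{-1}\normm{\ipc f}_{L^2}$, while the hydrodynamic part $\P_\c f$ is smooth in $v$, so the velocity integral can be pulled out, and the resulting purely spatial average is bounded by $\normm{\P_\c f}_{L^6_x}$ via H\"older together with the integrability of $k_\omega$ against a power of the spatial volume element.

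The main obstacle is twofold. First, in the unbounded exterior $\O^c$ the backward trajectory from $\pt\O$ may travel for arbitrarily long distances before returning, so the stochastic cycle estimate cannot rely on the short transit times available in a bounded domain and must use the exponential damping $\exp[-C_0\inn{v}(t-s)/\e^2]$ together with the velocity weight $\omega$; one must verify that restricting to characteristics with $\norm{v}$ in a suitable annulus produces both enough smallness from the complement and enough control from the factor $\e^{1/2}$. Second, the refined splitting into $\P_\c f$ in $L^6$ and $\ipc f$ in $L^2$ requires that the change of variables from $u$ to $y$ remain non-degenerate for the relevant range of $s$, which is where the finite-bounce truncation of the cycle expansion enters. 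Once these points are settled, collecting the initial, boundary, source, and $L^p$ contributions along the expansion yields the stated inequality.
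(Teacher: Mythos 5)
Your outline reproduces, in sketch form, exactly the argument the paper relies on: the paper offers no proof of its own but cites Proposition 3.10 of \cite{Esposito2018-apde}, whose proof is precisely the weighted Duhamel representation along characteristics with the $\e^{-2}C_0\inn{v}$ damping, the stochastic-cycle treatment of the diffuse reflection, and the double Duhamel iteration of $K_\b$ with the velocity-to-space change of variables, split into the hydrodynamic part estimated in $L^6_x$ by H\"older and the microscopic part estimated in $L^2_{x,v}$ by Cauchy--Schwarz. Your explicit attention to the unbounded exterior domain (long excursions between bounces, non-degeneracy of the change of variables) is the one adaptation the paper passes over silently, and it is the right point to verify; the long free flights only help the cycle estimate, since the grazing-set smallness is a boundary-local issue near $\pt\Omega$, identical to the bounded case.
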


\begin{proof}
This is a proposition 3.10 of the \cite{Esposito2018-apde}.
\end{proof}

\begin{proposition}($L^6$ estimate)\label{l6estimate}
We have:
\begin{eqnarray*}
\normm{\P_\c f}_{6}\lesssim \e^{-1}\normm{\ipc f}_\nu+
\normm{ g{{\nu}^{-\frac 1 2}}}_2 + \e \normm{\pt_t f}_{2}+\e^{-1/2}\norm{(1-\mathscr{P}^{\c}_{\g})f}_{2,+}\\+ \e^{1/2} \norm{r}_\infty+ o(1)[\e^{1/2}\normm{f}_{\infty}] 
.\label{P699}
\end{eqnarray*}
\end{proposition}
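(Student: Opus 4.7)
The plan is to bound $\normm{\P_\c f}_{6}$ by extracting the three macroscopic coefficients of the fluid part and controlling each of them in $L^6(\Omega^c)$ by duality. Write
\begin{equation*}
\P_\c f = \left\{ a(t,x) + b(t,x)\cdot v_\c + c(t,x)\,\tfrac{\norm{v_\c}^2-3}{2}\right\} \sqrt{\mu_\c},
\end{equation*}
so that $\normm{\P_\c f}_{6}$ is equivalent to $\normm{a}_{L^6_x} + \normm{b}_{L^6_x} + \normm{c}_{L^6_x}$. The task reduces to bounding $a,b,c$ in $L^6(\Omega^c)$.

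For each scalar field I would run the Stokes/Caflisch-type test-function argument of \cite{Esposito2018-apde}, adapted to the exterior geometry. To bound $\normm{a}_{L^6}$, write $\normm{a}_{L^6}^6 = \int a\cdot a\norm{a}^4\, \dd x$ and introduce a potential $\Phi_a$ solving the exterior Dirichlet problem $-\Delta \Phi_a = a\norm{a}^4$ in $\Omega^c$, $\Phi_a\big|_{\pt\Omega}=0$, with decay at infinity; homogeneous-Sobolev theory on the unbounded domain yields $\normm{\nabla \Phi_a}_{L^{6/5}} \lesssim \normm{a}_{L^6}^5$. Construct analogous vector $\Phi_b$ and scalar $\Phi_c$ for the momentum and energy modes. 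Then form velocity-weighted test functions such as $\Psi_a = (\norm{v_\c}^2-5)\, v_\c\cdot\nabla_x\Phi_a\, \sqrt{\mu_\c}$, and analogues $\Psi_b,\Psi_c$, designed to lie orthogonal to the undesired kernel modes so as to isolate the targeted coefficient. Multiplying the equation \eqref{lineareq} by each $\Psi_\bullet$, integrating over $\Omega^c\times\R^3$, and applying Lemma \ref{green} produces on the left-hand side a quantity comparable to $\normm{a}_{L^6}^6 + \normm{b}_{L^6}^6 + \normm{c}_{L^6}^6$ modulo lower-order perturbations.

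The right-hand side collects the five stated contributions. The collision term $\e^{-1} L_\c f$ couples only through $\ipc f$ and contributes $\e^{-1}\normm{\ipc f}_\nu$; the source $g$ pairs against $\Psi_\bullet$ to give $\normm{\nu^{-1/2} g}_2$; the time derivative gives $\e\normm{\pt_t f}_2\cdot\normm{\nabla\Phi}_{L^2}$, which is absorbed after dividing by the common factor $\normm{a}_{L^6}^5 + \normm{b}_{L^6}^5 + \normm{c}_{L^6}^5$. Boundary integrals from Green's identity split, via the identity $f = \mathscr{P}^{\c}_{\g} f + \e^{1/2} r$ and Lemma \ref{boundary1}, into a part controlled by $\e^{-1/2}\norm{(1-\mathscr{P}^{\c}_{\g})f}_{2,+}$ and a part contributing $\e^{1/2}\norm{r}_\infty$. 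The small residual coming from the compact part of $L_\c$ acting on the high-velocity tail of $\Psi_\bullet$ gives the absorbable remainder $o(1)[\e^{1/2}\normm{f}_\infty]$, whose $o(1)$ can be made small by truncating $v$ on a large ball and exploiting the smallness of the weight $\b'$ in $\omega$.

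The main obstacle is the construction and control of the dual potentials $\Phi_a,\Phi_b,\Phi_c$ on the unbounded exterior domain. Unlike the bounded setting of \cite{Esposito2018-apde}, the Poisson problem must be posed in the homogeneous Sobolev space $\dot H^1(\Omega^c)$ with prescribed decay at infinity; for the scalar modes $a$ and $c$ one must rule out the constant-at-infinity obstruction, which works because the right-hand side $a\norm{a}^4\in L^{6/5}$ has zero total mass only after a Helmholtz-type projection, handled by using the Beppo-Levi space directly. A secondary technical point is that the kernel of $L_\c$ is centered at $\sqrt{\mu_\c}$ rather than $\sqrt{\mu}$, so orthogonality computations for $\Psi_\bullet$ must be verified modulo $O(\e\norm{\c})$ corrections, which are absorbed into the prefactors of the final estimate.
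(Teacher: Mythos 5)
Your strategy is essentially the proof the paper relies on: the paper does not argue Proposition \ref{l6estimate} itself but invokes Proposition 3.2 of \cite{Esposito2018-cmp}, whose proof is precisely this duality/test-function argument solving exterior Poisson problems for the macroscopic coefficients $a,b,c$ of $\P_\c f$ and pairing the resulting velocity-weighted test functions with the equation, so your reconstruction matches the cited argument (with the time-derivative treated as a source, yielding the $\e \normm{\pt_t f}_2$ term). Two slips to repair when writing details: the displayed test function with weight $(\norm{v_\c}^2-5)$ isolates the energy mode $c$ rather than $a$ (for $a$ the polynomial weight must be chosen to annihilate the $c$ contribution), and the elliptic bound actually needed and true is $\normm{\nabla \Phi_a}_{L^2}+\normm{\nabla^2 \Phi_a}_{L^{6/5}}\lesssim \normm{a}_{L^6}^5$ rather than $\normm{\nabla \Phi_a}_{L^{6/5}}\lesssim \normm{a}_{L^6}^5$ (which fails by scaling); for the exterior Dirichlet problem this follows from the variational solution in the Beppo--Levi space plus Calder\'on--Zygmund estimates, with no mass or Helmholtz-type compatibility condition required.
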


\begin{proof}
This is a proposition 3.2 of the \cite{Esposito2018-cmp}. In that paper they assume the smallness of $\c$ but we can get the same result without assuming that.
\end{proof}

\begin{proposition} \label{l3estimate}
($L^3$ estimate)
Assume $g \in L^2 (\R_+ \times \Omega^c \times \R^3)$, $f_0 \in L^2 (\Omega^c \times \R^3)$, and $f_\g \in L^2(\R_+ \times \g)$. Let $f \in L^{\infty}(\R_+ ; L^2(\Omega^c \times \R^3))$ solve (BE) in the sense of distribution and satisfy $f(t,x,v) = f_{\g} (t,x,v)$ on $\R_+ \times \g$ and $f(0,x,v) = f_0 (x,v)$ on $\Omega \times \R^3$. Then there exist $S_1$ and $S_2$ satisfying
\begin{align}
\norm{a(t,x)} + \norm{b(t,x)} + \norm{c(t,x)} \le S_1 f(t,x) + S_2 f(t,x).
\end{align}
Moreover,
\begin{align}
\normm{S_1 f}_{L^2_t L^3_x} &\lesssim \normm{f}_{L^2_t L^2_x}+\normm{g}_{L^2_t L^2_x} + \normm{f_0 }_{L^2_x} + \normm{ v \cdot \nabla f_0}_{L^2_x} + \norm{f_0}_{L^2_{\g}} + \norm{f}_{L^2_t L^2_{\g}} \\
\normm{S_2 f}_{L^2_t L^2_x} &\lesssim \normm{\ipc f}_{L^2_{t,x,v}}
\end{align}
\end{proposition}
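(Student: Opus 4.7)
The plan is a duality-based argument in the spirit of the $L^3$--$L^6$ framework of \cite{Esposito2018-apde}, adapted to the exterior domain $\O^c$. Writing the hydrodynamic projection as $\P_\c f = \{a(t,x) + b(t,x)\cdot v_\c + c(t,x)(\norm{v_\c}^2-3)\}\sqrt{\mu_\c}$, I extract the moments $a,b,c$ individually by testing the linear equation $\e \pt_t f + v\cdot\nabla_x f + \e^{-1} L_\c f = g$ against functions $\psi^a, \psi^b, \psi^c$ of the form $\Psi(t,x)\, p(v)\sqrt{\mu_\c}$. The velocity polynomial $p(v)$ is selected so that $v\cdot\nabla_x\psi$ contains a $\ker L_\c$ component which, paired against $\P_\c f$, reproduces the desired moment times a scalar function of $x$; the complementary contribution coming from $\ipc f$ is absorbed into the $S_2 f$ term via Cauchy--Schwarz.

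Concretely, to control $\normm{a}_{L^2_t L^3_x}$ I pick by duality an arbitrary $q \in L^2_t L^{3/2}_x$ of unit norm and solve an exterior Poisson-type problem whose right-hand side encodes $q$, with zero Dirichlet data on $\pt\O$ and decay at infinity, so that $\normm{\nabla_x^2 \Phi}_{L^{3/2}_x} \lesssim \normm{q}_{L^{3/2}_x}$ by standard exterior elliptic regularity. Setting $\psi^a := \nabla_x \Phi \cdot A(v)\sqrt{\mu_\c}$ for a suitable vector polynomial $A(v)$, the weak form of the equation, combined with Lemma \ref{green}, integration by parts in $t$, and the identities $L_\c^* = L_\c$ and $\P_\c f \in \ker L_\c$, yields an identity of the schematic form
\begin{align*}
\int_0^T\!\!\int_{\O^c} a\, q\, \dd x\, \dd t &= -\e\, \inn{f,\psi^a}\Big|_{0}^{T} + \e \int_0^T \inn{f,\pt_t \psi^a}\, \dd t + \int_0^T \inn{g,\psi^a}\, \dd t \\
&\quad - \int_0^T \inn{\ipc f,\, v\cdot\nabla_x\psi^a - \e^{-1} L_\c \psi^a}\, \dd t - \int_0^T\!\!\int_{\g} f_\g\, \psi^a\, \dd \g\, \dd t.
\end{align*}
The terms pairing with $\ipc f$ define the $S_2 f$ component, bounded in $L^2_t L^2_x$ by $\normm{\ipc f}_{L^2_{t,x,v}}$. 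The remaining terms pair $f,\, g,\, f_0,$ or $f_\g$ against functions whose $L^2_t L^{3/2}_x$ norm is controlled by $\normm{q}_{L^2_t L^{3/2}_x}$, defining $S_1 f$ with the stated $L^2_t L^3_x$ estimate via H\"older. The norm $\normm{v\cdot\nabla_x f_0}_{L^2_x}$ enters when eliminating $\pt_t \psi^a$ at $t=0$ through the equation itself. Analogous constructions with test functions tailored to $b$ and $c$ complete the pointwise bound $\norm{a}+\norm{b}+\norm{c}\le S_1 f + S_2 f$.

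The principal technical difficulty is elliptic regularity on the unbounded exterior domain: one must work in homogeneous or weighted Sobolev spaces that simultaneously accommodate $L^{3/2}$--$L^3$ duality, the Dirichlet condition on $\pt\O$, and decay at infinity, glued with interior $W^{2,3/2}$ estimates near the boundary. A secondary difficulty is the boundary integral $\int_\g f_\g\, \psi^a\, \dd\g$: since only $\normm{f}_{L^2_t L^2_\g}$ is available in the hypothesis, one must exploit the diffuse-reflection splitting $f|_\g = \mathscr{P}^\c_\g f + \e^{1/2} r$ together with a trace estimate for $\psi^a$ to keep the boundary contribution consistent with the stated estimate. Once these ingredients are in place, summing the $a, b, c$ contributions yields the proposition.
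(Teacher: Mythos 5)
Your proposal takes a route the paper does not take, and the route has gaps that would prevent it from delivering the proposition as stated. The paper's own proof is a citation: it invokes Proposition 3.4 of \cite{Esposito2018-apde}, whose mechanism is not duality but extension plus velocity averaging — one extends $f$ past the boundary to all of $\R^3$ (using the trace datum on $\g$ and the initial datum, which is where the terms $\norm{f_0}_{L^2_\g}$, $\norm{f}_{L^2_tL^2_\g}$ and $\normm{v\cdot\nabla f_0}_{L^2_x}$ enter), writes a Duhamel representation along free transport, and applies the averaging lemma so that the velocity averages gain $H^{1/2}_x\hookrightarrow L^3_x$; the pieces of the representation driven by the data define $S_1 f$, and the piece driven by $\ipc f$ defines $S_2 f$. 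Since only the behavior near $\pt\O$ matters for the extension, the bounded-domain argument transfers verbatim to $\O^c$, which is the entire content of the paper's remark.

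The concrete problems with your duality scheme are these. First, the proposition asserts a \emph{pointwise} decomposition $\norm{a}+\norm{b}+\norm{c}\le S_1f(t,x)+S_2f(t,x)$ with the two pieces estimated in different norms; this pointwise structure is exactly what is consumed later (in the proof of Proposition \ref{nonlinearestimate}, where $\P_\c g_1^2\lesssim S_1g_1^2+S_2g_1^2$ is inserted under an integral and the two pieces are treated by different H\"older pairings). A duality argument can only bound norms of $a,b,c$; it does not construct the functions $S_1f$, $S_2f$, so it cannot yield the stated conclusion. Second, the scaling is wrong: your test functions $\Psi(t,x)p(v)\sqrt{\mu_\c}$ cannot lie in $\ker L_\c$ if $v\cdot\nabla_x\psi$ is to reproduce the moments, so your identity contains $\e^{-1}\inn{\ipc f,L_\c\psi^a}$ and the $\ipc$ contribution necessarily carries a factor $\e^{-1}$, exactly as in the $L^6$ estimate of Proposition \ref{l6estimate}; but the claimed bound is $\normm{S_2f}_{L^2_tL^2_x}\lesssim\normm{\ipc f}_{L^2_{t,x,v}}$ with no $\e^{-1}$, and this gain of a full factor $\e$ over the $L^6$-type duality estimate is the raison d'\^etre of the $L^3$ estimate. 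Third, the time term is uncontrollable in your setup: $q\in L^2_tL^{3/2}_x$ is an arbitrary dual element with no time regularity, so $\pt_t\psi^a$, hence $\e\int\inn{f,\pt_t\psi^a}$, cannot be estimated; in the $L^6$ argument this is avoided because the elliptic datum is built from the moments of $f$ itself and $\pt_t$ is traded through the local conservation laws at the price of an $\e\normm{\pt_tf}_2$ term — a term which is absent from (and not allowed in) the statement you are proving. For the same reason your explanation that $\normm{v\cdot\nabla f_0}_{L^2_x}$ arises from ``eliminating $\pt_t\psi^a$ at $t=0$'' does not cohere; in the actual proof it comes from handling the initial datum in the extension/Duhamel step. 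In short, what your scheme could at best produce is an $L^3$ analogue of Proposition \ref{l6estimate}, with $\e^{-1}\normm{\ipc f}$ and $\e\normm{\pt_tf}_2$ on the right-hand side and no pointwise splitting — a genuinely weaker statement than the one required here.
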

\begin{proof}
This is a proposition 3.4 of the \cite{Esposito2018-apde}. In the paper \cite{Esposito2018-apde}, the proposition deals with the bounded domain case. However, the way they did is to extend the function to the $\R^3$. When we extend the equation, only what is important near the boundary matters, which means that we can apply the same method to the exterior domain.
\end{proof}

\section{Nonlinear estimate}
In this section we will estimate nonlinear term of the equation which is represent as $g$ in \eqref{lineareq}.
\begin{align*}
g = h + \tilde{L}(f) + \e^{1/2} \G(f,f)
\end{align*}

\subsection{Estimate of Navier-Stokes equation} \label{nseqestimate}
In this section we will discuss the local existence of Navier-Stokes equation in exterior domain.
Start with the incompressible viscous fluid past an isolated rigid body with the following initial boundary value problem of the Navier-Stokes equation:
\begin{align} \label{nsequ}
\begin{split}
\pt_t u - \Delta u + (u \cdot \nabla)u + \nabla P = 0, \quad \nabla \cdot u = 0 &\text{ in } (0,\infty) \times \Omega \\
u(t,x) = 0 & \text{ on } (0,\infty) \times \pt \Omega \\
u(0,x) = u_0(x) & \text{ in } \Omega \\
\lim_{x \to \infty} u(t,x) = \c.
\end{split}
\end{align}
Setting $v = u - \c$, the Navier-Stokes equation became
\begin{align} \label{nseqv}
\begin{split}
\pt_t v - \Delta v + (\c \cdot \nabla)v + (v \cdot \nabla)v + \nabla P = 0, \quad \nabla \cdot v = 0 &\text{ in } (0,\infty) \times \Omega \\
v(t,x) = -\c & \text{ on } (0,\infty) \times \pt \Omega \\
v(0,x) = u_0(x) -\c & \text{ in } \Omega \\
\lim_{x \to \infty} v(t,x) = 0.
\end{split}
\end{align}
Let $w$ is a stationary solution of \eqref{nseqv}. Which means
\begin{align} \label{nseqw}
\begin{split}
- \Delta w + (\c \cdot \nabla)w + (w \cdot \nabla)w + \nabla P = 0, \quad \nabla \cdot w = 0 &\text{ in }  \Omega \\
w(x) = -\c & \text{ on }  \pt \Omega \\
\lim_{x \to \infty} w(x) = 0.
\end{split}
\end{align}
Existence of $w$ and $w \in L^3(\Omega^c) \cap L^{\infty}$, $\nabla w \in L^2(\Omega^c) \cap L^\infty$ is proved in \cite{Galdi2011}.

Let $z = v-w$ then the equation become
\begin{align}
\label{nseqz}
\begin{split}
\pt_t z - \Delta z + (\c \cdot \nabla)z +(w \cdot \nabla)z+ (z \cdot \nabla)w  + (z \cdot \nabla)z + \nabla P = 0, \quad \nabla \cdot z = 0 &\text{ in } (0,\infty) \times \Omega \\
z(t,x) = 0 & \text{ on } (0,\infty) \times \pt \Omega \\
z(0,x) = z_0(x) := u_0(x) -\c -w  & \text{ in } \Omega \\
\lim_{x \to \infty} z(t,x) = 0.
\end{split}
\end{align}

Thus, the first step to understand above equation is to understand Oseen's equation given follows
\begin{align}
\begin{split} \label{oseeneq}
\pt_t u - \Delta u + (\c \cdot \nabla)u + \nabla P = 0, \quad \nabla \cdot u = 0 &\text{ in } (0,\infty) \times \Omega \\
u(t,x) = 0 & \text{ on } (0,\infty) \times \pt \Omega \\
u(0,x) = u_0(x) & \text{ in } \Omega \\
\lim_{x \to \infty} u(t,x) = 0.
\end{split}
\end{align}
To understand Ossen's equation, we define Oseen's semigroup and Leray projection.
\begin{definition}
Let $T_{\c}$ is a Ossen's semigruop, which means $T_{\c}(t)u_0$ is a solution of \eqref{oseeneq}.

Let $\mathbb{P}$ be the Leray projection which from $L^p(\Omega^c)^3$ to the complement of $\{u \in C^{\infty}_0(\Omega^c) : \nabla \cdot u =0 \text{ in } \Omega^c\}$.
\end{definition}

\begin{proposition} \label{lpdecay}
(Existence and time decay of the Oseen's equation.)
Let $T_{u_{\infty}}(t)$ is a analytic semigroup generated by Oseen's equation. Then, Oseen's semigruop is commute with Leray projection and the following decay estimate holds if $1 \le p \le q \le \infty$ and $(p,q) \neq (1,1), (\infty, \infty)$
\begin{align*}
\normm{T_{u_{\infty}}(t) u_0}_q &\le C (1+t)^{3(1/p-1/q)} \normm{u_0}_{p}, \\
\normm{\nabla T_{u_{\infty}}(t) u_0}_q &\le C (1+t)^{3(1/p-1/q)-1/2} \normm{u_0}_{p}.
\end{align*}
\end{proposition}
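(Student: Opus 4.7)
The plan is to establish the proposition by combining whole-space Oseen kernel estimates with a localized boundary correction, after first reducing to the solenoidal framework via the Leray projection. This is essentially the Kobayashi--Shibata / Enomoto--Shibata strategy for exterior-domain Oseen semigroups.

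First I would apply $\mathbb{P}$ to the Oseen system \eqref{oseeneq}. Because $\mathbb{P}$ annihilates pressure gradients and both the Laplacian and the constant-coefficient drift preserve the solenoidal condition, the equation reduces to $\pt_t u = -A_{\c} u$ with $A_{\c} := -\mathbb{P}(\Delta - \c \cdot \nabla)$ on $L^p_\sigma(\Omega^c)$. Analyticity of $T_{\c}(t) = e^{-t A_{\c}}$ follows by treating $\mathbb{P}(\c \cdot \nabla)$ as a lower-order perturbation of the classical Stokes operator, whose sectorial resolvent estimates on exterior domains are standard (Borchers--Sohr, Giga--Sohr). Commutation with $\mathbb{P}$ is then automatic, since each term of the generator preserves the solenoidal subspace.

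Second, for the $L^p$--$L^q$ decay I would split $T_{\c}(t) u_0 = E(t) \tilde{u}_0 + R(t) u_0$, where $\tilde{u}_0$ is a solenoidal extension of $u_0$ to $\R^3$, $E(t)$ is the whole-space Oseen semigroup (given explicitly by convolution with the Oseen tensor), and $R(t)$ is the boundary correction enforcing the no-slip condition. For $E(t)$ the required bounds follow directly from $L^p$--$L^q$ estimates of the Oseen fundamental solution, which combine the heat-kernel gain with the transport along $\c$; differentiating the kernel yields the extra $t^{-1/2}$ factor in the gradient estimate. For $R(t)$ I would invert via the resolvent formula and exploit decay of $(\lambda + A_{\c})^{-1}$ away from the imaginary axis, tracking how the boundary data produced by $E(t)\tilde{u}_0$ decays. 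The form $(1+t)^{\cdots}$ then arises from interpolating between the short-time parabolic smoothing (controlling small $t$) and the large-time dispersive decay.

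The main obstacle is the anisotropic structure of the Oseen fundamental solution: the uniform velocity at infinity produces a paraboloidal wake, so decay rates parallel and perpendicular to $\c$ differ, and extracting a clean isotropic $L^p$--$L^q$ bound requires careful handling of the wake region. A secondary difficulty is the low-frequency analysis of the resolvent $(\lambda + A_{\c})^{-1}$ on the exterior domain in three dimensions, where the pure Stokes resolvent exhibits only mild decay near $\lambda = 0$; here the drift $\c$ actually helps, providing an effective spectral gap that drives the long-time decay of the boundary correction $R(t)$. The excluded endpoints $(p,q)=(1,1),(\infty,\infty)$ correspond exactly to the cases where $\mathbb{P}$ fails to be bounded, so the reduction to $L^p_\sigma$ breaks down there.
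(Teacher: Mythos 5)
You should first note that the paper itself does not prove this proposition: its ``proof'' is a citation of Enomoto--Shibata and Kobayashi--Shibata, so the only meaningful comparison is with those works, whose architecture your sketch reproduces at the structural level --- reduction by the Leray projection to the Oseen operator $A_{\c}=-\mathbb{P}(\Delta-\c\cdot\nabla)$ on the solenoidal $L^p$ space, analyticity by treating the drift as a lower-order perturbation of the Stokes operator, a decomposition into a whole-space Oseen part plus an exterior correction, and the explanation of the excluded endpoints $(1,1)$, $(\infty,\infty)$ through the unboundedness of the Helmholtz projection on $L^1$ and $L^\infty$. All of that is consistent with the cited proofs.

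There is, however, a genuine gap at precisely the step that is supposed to produce the large-time rates. The exterior Oseen operator has no spectral gap: its essential spectrum contains the parabolic region $\left\{-\norm{\xi}^2+i\,(\c\cdot\xi) : \xi\in\R^3\right\}$, which touches the origin, and this is exactly why the decay in the proposition is polynomial rather than exponential. Consequently the boundary correction $R(t)$ cannot be controlled by ``decay of $(\lambda+A_{\c})^{-1}$ away from the imaginary axis,'' nor by any ``effective spectral gap'' supplied by the drift; as written, your argument has no source for the large-time decay, so the claimed interpolation between short-time parabolic smoothing and ``large-time dispersive decay'' has nothing to interpolate against. The actual mechanism in Kobayashi--Shibata and Enomoto--Shibata is a \emph{local energy decay} theorem: polynomial-in-$t$ decay of $T_{\c}(t)u_0$ in $L^q$ of a bounded neighborhood of the obstacle, proved by a delicate expansion of the Oseen resolvent near $\lambda=0$, uniform for small $\norm{\c}$ --- the drift helps by improving the behavior of this expansion relative to the Stokes case, not by opening a gap. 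The global $L^p$--$L^q$ estimates are then assembled by gluing this local decay to whole-space Oseen kernel bounds through a cut-off construction, with a Bogovskii-type correction restoring the divergence-free condition destroyed by the cut-off --- a step your decomposition $T_{\c}(t)u_0=E(t)\tilde u_0+R(t)u_0$ silently requires as well. Finally, be aware that the gradient estimate does not hold in an exterior domain for the full stated range: in the cited works the rate $t^{-\frac12-\frac32(\frac1p-\frac1q)}$ for $\nabla T_{\c}(t)$ is obtained only for $q\le 3$, and the decay exponents there are negative, $t^{-\frac32(\frac1p-\frac1q)}$, so the exponents printed in the proposition must be read accordingly; a correct write-up should flag both points rather than reproduce the statement as given.
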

\begin{proof}
This is the result of the \cite{Enomoto2005} and \cite{Kobayashi1998}.
\end{proof}

Next we will prove the short time existence of the $L^p$ solution of the following nonlinear problem.

\begin{proposition}
(Local existence of Navier-Stokes equation) For any $M>0$, there exist $T_0$ such that the equation \eqref{nseqz} has a solution in $[0,T_0] \times \Omega^c$ if $\norm{\c}, 
\normm{z_0}_{L^1},\normm{z_0}_{L^\infty} < M$ and $\nabla \cdot z_0 = 0$
\end{proposition}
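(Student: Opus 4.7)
The plan is a standard Kato-type fixed-point argument on the mild formulation using Oseen's semigroup $T_\c(t)$. Since $\nabla \cdot z = \nabla \cdot w = 0$, each convective term in \eqref{nseqz} can be put in divergence form,
\[
(w \cdot \nabla) z = \nabla \cdot (w \otimes z), \quad (z \cdot \nabla) w = \nabla \cdot (z \otimes w), \quad (z \cdot \nabla) z = \nabla \cdot (z \otimes z),
\]
and applying the Leray projection $\mathbb{P}$ together with Duhamel's formula converts \eqref{nseqz} into
\[
z(t) = T_\c(t) z_0 - \int_0^t T_\c(t-s)\, \mathbb{P}\, \nabla \cdot \bigl[ w \otimes z + z \otimes w + z \otimes z \bigr](s)\, ds.
\]
I would solve this integral equation on $[0, T_0]$ by contraction in the closed ball
\[
X_{T_0} := \Bigl\{ z \in C([0,T_0]; L^1(\Omega^c) \cap L^\infty(\Omega^c)) : \nabla \cdot z = 0,\ \normm{z}_{X_{T_0}} \le 2 C_1 M \Bigr\},
\]
with $\normm{z}_{X_{T_0}} := \sup_{0 \le t \le T_0} (\normm{z(t)}_{L^1} + \normm{z(t)}_{L^\infty})$ and $C_1$ a constant coming from the semigroup estimates of Proposition \ref{lpdecay}.

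For the linear piece, Proposition \ref{lpdecay} applied with $p = q$ (reaching the $L^1$ and $L^\infty$ endpoints by interpolating between admissible pairs) yields $\normm{T_\c(t) z_0}_{L^1 \cap L^\infty} \le C_1 M$ uniformly on $[0, T_0]$. For the Duhamel term, I would push the divergence onto the semigroup and invoke the short-time form of the gradient bound $\normm{\nabla T_\c(t-s) \mathbb{P} f}_{q} \lesssim (t-s)^{-1/2} \normm{f}_{p}$ with suitably chosen $(p,q)$, and estimate the tensor products using Galdi's bounds $w \in L^3 \cap L^\infty$, $\nabla w \in L^2 \cap L^\infty$ together with membership in $X_{T_0}$. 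Since $(t-s)^{-1/2}$ is integrable on $[0, T_0]$, choosing $T_0$ small depending only on $M$ and universal constants produces both the self-mapping property and a strict contraction factor, and hence a unique mild solution by Banach's theorem; standard parabolic regularity then upgrades this to a classical solution of \eqref{nseqz}.

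The main obstacle is to treat the endpoint cases $(p,q) = (1,1), (\infty,\infty)$, which are explicitly excluded by Proposition \ref{lpdecay}. I would bypass them by factoring each bilinear term through an intermediate $L^r$ with $r \in (1,\infty)$: bound $z \otimes z$ in $L^r$ by $\normm{z}_{L^1}^{\theta} \normm{z}_{L^\infty}^{1-\theta}$ via H\"older, then apply the non-endpoint gradient estimates to land in $L^1$ or $L^\infty$. An additional subtlety is the linear perturbation $(w \cdot \nabla) z + (z \cdot \nabla) w$: since $w$ has Galdi norms of size comparable to $\norm{\c} \lesssim M$, it cannot be absorbed into the contraction constant and must instead be swallowed by the smallness of $T_0$. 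Once these endpoint and linear-perturbation issues are handled, the genuinely nonlinear term $(z \cdot \nabla) z$ is controlled exactly as in the classical Kato construction, completing the proof.
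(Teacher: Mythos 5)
Your proposal is correct and follows essentially the same route as the paper: the mild (Duhamel) formulation with the Oseen semigroup $T_{\c}$ and Leray projection, a contraction argument in a ball of $L^1\cap L^\infty$ with $T_0$ chosen small depending only on $M$, using Proposition \ref{lpdecay} for the semigroup and Galdi's bounds $\normm{w}_{L^\infty}\lesssim \norm{\c}$ for the linear perturbation. The only difference is that you spell out the treatment of the excluded endpoint pairs $(1,1),(\infty,\infty)$ and the integrable $(t-s)^{-1/2}$ singularity from the divergence-form nonlinearity, points the paper's proof passes over more quickly.
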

\begin{proof}
Define the Duhamel operator $\Phi$ as follows
\begin{align*}
\Phi(z) :&= T_{\c}(t) z_0 + \int_{0}^{t} T_{\c}(t-s) [(w \cdot \nabla)z+ (z \cdot \nabla)w  + (z \cdot \nabla)z] \dd s \\
& = T_{\c}(t) z_0 + \int_{0}^{t} T_{\c}(t-s) \mathbb{P}(\nabla\cdot (w \otimes z + z \otimes w + z \otimes z)) \dd s.
\end{align*}
Then, the solution of the \eqref{nseqz} is the fixed point of $\Phi$ which is $\Phi(z) = z$.
For $1< p< \infty$ we can obtain the following $L^p$ control, if we set $p=q$ in Proposition \ref{lpdecay} and use Holder inequality
\begin{align} \label{lpphi}
\normm{\Phi (z(\cdot,t))}_{L^p} \le C_1 \left( \normm{z_0}_{L^p} + t(\normm{w}_{L^\infty}+ \normm{z}_{L^\infty}) \normm{z}_{L^p}\right),
\end{align}
holds for constant $C_1$.
For $L^\infty$ control we set $p=p$ and $q = \infty$ in Proposition \ref{lpdecay},
\begin{align} \label{linfphi}
\normm{\Phi (z(\cdot,t))}_{L^\infty} \le C_2 \left(  \normm{z_0}_{L^p} + t(\normm{w}_{L^\infty}+ \normm{z}_{L^\infty}) \normm{z}_{L^p} \right),
\end{align}
holds for constant $C_2$.
Since $\norm{\c} < M$, $\normm{w}_{L^\infty} \le  C_3 M$ holds for consatant $C_3$ by \cite{Galdi2011}. 

Define space $X$ 
\begin{align*}
X := \left\{ z : \normm{z}_{L^1}, \normm{z}_{L^\infty} < C M  \right\},
\end{align*}
where $C = 2 \max\{C_1, C_2, C_3\}$ and define time $T_0 = \frac{1}{10 C M}$.

By \eqref{lpphi} and\eqref{linfphi}, if $z \in X$, then $\Phi (z) \in X$. In addition, $\Phi$ is a contraction on that space $X$ since
\begin{align*}
\normm{\Phi(u)- \Phi(v)}_{L^p} &\le T_0 (\normm{w}_{L^\infty}+ \normm{u}_{L^\infty} + \normm{v}_{L^\infty}) \normm{u-v}_{L^p} \\
\normm{\Phi(u)- \Phi(v)}_{L^\infty} &\le T_0 (\normm{w}_{L^\infty}+ \normm{u}_{L^\infty} + \normm{v}_{L^\infty}) \normm{u-v}_{L^p}
\end{align*}
Thus, we have a fixed point $z$ which is a solution of the equation \eqref{nseqz}.
Thus, $z$ satisfies 
\begin{align} \label{eqforz}
\pt_t z = \Delta z - \mathbb{P}((\c \cdot \nabla)z +(w \cdot \nabla)z+ (z \cdot \nabla)w  + (z \cdot \nabla)z) 
\end{align}
in the sense of distribution. If $z_0$ is sufficiently smooth($H^2$), right hand of \eqref{eqforz} is also smooth which means $z$ is a strong solution of the equation \eqref{nseqz}.
\end{proof}

\subsection{Estimate of fluid term}

In this section we will get the estimate of $h$ in \eqref{BoltzR}. $h$ can be split into the order of $\e$.

\begin{eqnarray}
h&=&-\e^{-3/2} L_{\mathbf{u}}(f_1)
\\
&& + \e^{-1/2} (-v_{\mathbf{u}} \cdot \nabla f_1 - L_{\mathbf{u}}(f_2) + \G(f_1,f_1) ) \\
&&  + \e^{1/2} (-\pt_t f_1 - v_{\mathbf{u}} \cdot \nabla f_2 - \mathbf{u} \cdot \nabla f_1 + \G(f_1, f_{2}) + \G(f_2, f_{1}) )\\
&&+\e^{3/2} (-\pt_t f_2 - \mathbf{u} \cdot \nabla f_2+  \G(f_2,f_2)) \\
&=& \e^{-3/2}h_{-3/2} + \e^{-1/2}h_{-1/2} + \e^{1/2}h_{1/2} +
\e^{3/2}h_{3/2}
\end{eqnarray}
The following things are the main proposition of this section.

\begin{proposition}
$h_{-3/2}= h_{-1/2} = 0$ and $h_{1/2} \perp \ker L_{\c}$    
\end{proposition}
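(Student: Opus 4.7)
The plan is to verify the three assertions by direct algebraic manipulation, using the explicit formulas \eqref{deff1}--\eqref{deff2} for $f_1$ and $f_2$, the incompressible Navier--Stokes equation satisfied by $u$, and the kernel and conservation structure of $L_{\c}$ and $Q$.

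Since $f_1 = u(t,x)\cdot v_{\c}\sqrt{\mu_{\c}}$ lies in $\mathrm{span}\{v_{\c,i}\sqrt{\mu_{\c}}\}_{i=1}^{3}\subset\ker L_{\c}$ by \eqref{kerl}, we have $L_{\c}f_1=0$ and hence $h_{-3/2}=0$ at once.

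For $h_{-1/2}$, I would apply $L_{\c}$ to \eqref{deff2}. Using $L_{\c}\mathscr{A}_{ij} = -\sqrt{\mu}\bigl(v_{\c,i}v_{\c,j}-\tfrac{1}{3}|v_{\c}|^2\delta_{ij}\bigr)$ and $L_{\c}L_{\c}^{-1}\G(f_1,f_1)=\G(f_1,f_1)$, the image $L_{\c}f_2$ is a traceless symmetric tensor contracted with $\pt_iu_j+\pt_ju_i$, plus $\G(f_1,f_1)$; the latter cancels the $\G(f_1,f_1)$ in $h_{-1/2}$. For the transport term, expand $v_{\c}\cdot\nabla f_1 = \sqrt{\mu_{\c}}\sum_{i,j}v_{\c,i}v_{\c,j}\pt_iu_j$, symmetrize in $(i,j)$, and subtract the trace $\tfrac{1}{3}|v_{\c}|^2\delta_{ij}(\pt_iu_j+\pt_ju_i)=\tfrac{2}{3}|v_{\c}|^2\nabla\cdot u$, which vanishes by incompressibility. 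The resulting tensor structure matches the $\mathscr{A}_{ij}$ piece of $-L_{\c}f_2$ exactly, so $h_{-1/2}=0$.

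For $\P_{\c}h_{1/2}=0$, I would project $h_{1/2}$ onto each of the five kernel modes $\sqrt{\mu_{\c}},\,v_{\c,1}\sqrt{\mu_{\c}},\,v_{\c,2}\sqrt{\mu_{\c}},\,v_{\c,3}\sqrt{\mu_{\c}},\,|v_{\c}|^2\sqrt{\mu_{\c}}$ and verify each vanishes. Conservation of mass, momentum, and energy by $Q$ gives $\P_{\c}[\G(f_1,f_2)+\G(f_2,f_1)]=0$. Since $\pt_tf_1$ and $\c\cdot\nabla f_1$ already lie in $\ker L_{\c}$, they pass through $\P_{\c}$ and contribute $(\pt_tu+(\c\cdot\nabla)u)\cdot v_{\c}\sqrt{\mu_{\c}}$. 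The delicate term is $\P_{\c}[v_{\c}\cdot\nabla f_2]$: the $\mathscr{A}_{ij}$ part produces the viscous term $-\kappa\Delta u\cdot v_{\c}\sqrt{\mu_{\c}}$ plus a pure gradient $\nabla p_1\cdot v_{\c}\sqrt{\mu_{\c}}$, via the classical Gaussian moment identities of Chapman--Enskog type; the $L_{\c}^{-1}\G(f_1,f_1)$ part produces the convection $(u\cdot\nabla)u\cdot v_{\c}\sqrt{\mu_{\c}}$ plus a second gradient $\nabla p_2\cdot v_{\c}\sqrt{\mu_{\c}}$. Setting $p=p_1+p_2$, the momentum component of $\P_{\c}h_{1/2}$ collapses to
\[
-\bigl[\pt_tu+(u+\c)\cdot\nabla u+\nabla p-\kappa\Delta u\bigr]\cdot v_{\c}\sqrt{\mu_{\c}},
\]
which vanishes by the Navier--Stokes equation. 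The scalar component is proportional to $\nabla\cdot u$ and vanishes by incompressibility; the $|v_{\c}|^2\sqrt{\mu_{\c}}$-component vanishes because $f_1$ carries no temperature perturbation.

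The main obstacle is the precise moment computation of $\P_{\c}[v_{\c}\cdot\nabla f_2]$ and the identification of the two gradient parts $\nabla p_1,\nabla p_2$. This is a standard but tedious Gaussian-integral exercise; the viscosity $\kappa$ emerges, up to normalization, from $\inn{\mathscr{A}_{ij},\sqrt{\mu_{\c}}(v_{\c,i}v_{\c,j}-\tfrac{1}{3}|v_{\c}|^2\delta_{ij})}$.
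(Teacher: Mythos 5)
Your argument is essentially the paper's own proof: $h_{-3/2}=h_{-1/2}=0$ follow by direct computation from the definitions of $f_1,f_2$ together with $\nabla\cdot u=0$, and $h_{1/2}\perp\ker L_{\c}$ is obtained exactly as in the paper by projecting onto the five kernel modes, using the collision invariants for $\G$, the Gaussian moment identities for $\mathscr{A}_{ij}$ yielding $-\kappa\,\Delta u\cdot v_{\c}\sqrt{\mu_{\c}}$, the Bardos--Ukai computation of $\P_{\c}\bigl(v_{\c}\cdot\nabla_x L_{\c}^{-1}\G(f_1,f_1)\bigr)$, and finally the Navier--Stokes equation for $u$. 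Your step of identifying the Navier--Stokes pressure with the gradient terms produced by these moments (your $p_1,p_2$) is the same compression the paper makes in the phrase ``because of the construction of $u$,'' so the two arguments coincide in both substance and level of detail.
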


\begin{proof}
$h_{-3/2}$ and $h_{-1/2}$ is 0 because of the definition of $f_1$ and $f_2$.
$h_0 \perp \ker L_{\c}$ is also clear.
Then, we only need to prove $h_{1/2} \perp \ker L_{\c}$. Since $\ker L_{\c} = \{1, v_\c, \norm{v_{\c}}^2 \}\sqrt{\mu_{\c}}$, $\P_{\c} h_{1/2}$ can be written as
\begin{eqnarray}
(-\pt_t u \cdot v_\c - \c \nabla (u \cdot v_\c)) \sqrt{\mu_\c} - \P_\c (v_\c \cdot \nabla f_2)
\end{eqnarray}

\begin{eqnarray*}
\inn{v_{\c,k} \mathscr{A}_{ij},\sqrt{\mu_\c}} = 0 \quad& \inn{v_{\c,k} \mathscr{A}_{ij}, v_{\c,k} \sqrt{\mu_\c}} = 0  &\quad \inn{v_{\c,k} \mathscr{A}_{ij},\norm{v_\c}^2\sqrt{\mu_\c}} = 0
\end{eqnarray*}
Define
\begin{eqnarray*}
\inn{\mathscr{A}_{ij}, v_{\c,i} v_{\c,j} \sqrt{\mu_\c}} = -\kappa.
\end{eqnarray*}
Then, 
\begin{eqnarray*}
\inn{v_{\c,k} \mathscr{A}_{ij}, v_{\c,l} \sqrt{\mu_\c}} =- (\delta_{ik}\delta_{jl}+\delta_{il}\delta_{jk})\kappa.
\end{eqnarray*}
So 
\begin{eqnarray*}
P_\c \left(v_\c \cdot \nabla_x \frac{1}{2} \sum_{i,j=1}^{3} \mathscr{A}_{ij} \left[\pt_{x_i}u_{j} + \pt_{x_j}u_{i} \right] \right)&=& -\frac{1}{2} \kappa \sum_{i,j=1}^{3} \left[(\pt_i \pt_i u_j + \pt_j \pt_i u_i)v_{\c,j} + (\pt_j \pt_i u_j + \pt_j \pt_j u_i)v_{\c,i}\right]\\
&=&-\kappa \sum_{j=1}^{3} \left[(\Delta u_j + \pt_j \nabla \cdot u)v_{\c,j} \right]\\
&=& -\kappa \sum_{j=1}^{3}\Delta u_j v_j \sqrt{\mu}
\end{eqnarray*}
Moreover, by \cite{Bardos1991},
\begin{eqnarray*}
&&\P_\c (L^{-1}\left[\G\left(f_1,f_1\right)  \right]) =\sqrt{\mu_\c}\sum_{j=1}^3v_{\c,j}\left( (u \cdot \nabla) u_j - \pt_{j} \frac{1}{3}\norm{u}^2 \right)
\end{eqnarray*}
Thus,
$\P_\c h_{1/2} =0$ because of the construction of the $u$.
\end{proof}

\begin{proposition} 
\label{smallh}
For $i = \{1/2,  3/2 \}$, $\normm{h_{i}}_{L^p} \lesssim C(\normm{u_0}_{L^1}, \normm{u_0}_{L^\infty}, \normm{\nabla u_0}_{L^1}, \normm{\nabla u_0}_{L^\infty}, \norm{\c})$ for $2 \le p\le \infty$
\end{proposition}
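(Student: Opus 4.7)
The plan is to reduce the $L^p$ bound on each $h_i$ to an $L^p_x$ bound on a finite sum of products of $x$-derivatives of the Navier--Stokes velocity $u$, and then to invoke the short-time regularity theory of Section~\ref{nseqestimate} (together with the Oseen decay of Proposition~\ref{lpdecay}) to control those derivatives in $L^1\cap L^\infty$ by the initial data.

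First I would separate the $x$- and $v$-dependence. Inspecting the definitions \eqref{deff1}--\eqref{deff2}, both $f_1$ and $f_2$ take the schematic form $f_j(t,x,v)=\sum_\alpha P_{j,\alpha}(v)\,Q_{j,\alpha}(u,\nabla u)(t,x)$, where each $P_{j,\alpha}(v)$ is a polynomial in $v_\c$ times $\sqrt{\mu_\c}$ (possibly composed with the bounded operator $L_\c^{-1}$ restricted to $(\ker L_\c)^\perp$) and each $Q_{j,\alpha}$ is a polynomial in $u$ and $\nabla u$. Because $L_\c^{-1}$ preserves rapid Gaussian decay and $\G(\cdot,\cdot)$ applied to Gaussian-decaying factors produces again a Gaussian-decaying function, every $P_{j,\alpha}$ lies in $L^r_v$ for every $r\ge 1$ with an $\c$-dependent constant. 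Consequently, for any $q\ge 2$,
\begin{equation*}
\normm{h_i(t)}_{L^q_xL^q_v}\lesssim \sum_\alpha \normm{P_{i,\alpha}}_{L^q_v}\,\normm{Q_{i,\alpha}(t)}_{L^q_x},
\end{equation*}
and the issue collapses to $L^q_x$ estimates on the $Q$-factors.

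Next I would write down exactly which $x$-quantities appear. For $h_{1/2}$ the terms that enter are $\partial_t u$, $\nabla u$, $\nabla^2 u$, products $u\otimes \nabla u$, and (from $\Gamma(f_1,f_2)+\Gamma(f_2,f_1)$) products of $u$ with $\nabla u$ and $u\otimes u$. The time derivative is eliminated using the Navier--Stokes equation itself: $\partial_t u=\kappa\Delta u-(u+\c)\cdot\nabla u-\nabla p$, so $\partial_t u$ is controlled by $\nabla^2 u$, $u\nabla u$ and $\nabla p$ (and $\nabla p$ can in turn be expressed via the Leray projection applied to the nonlinearity). For $h_{3/2}$ one differentiates once more and also has to estimate $\partial_t\nabla^2 u$ and $\nabla^3 u$, plus bilinear pieces involving $f_2$.

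The main work, and the step I expect to be the principal obstacle, is then closing the $L^1\cap L^\infty$ bounds on these derivative quantities. Here I would use Proposition~\ref{lpdecay}: the gradient estimate $\normm{\nabla T_\c(t) u_0}_q\lesssim t^{-1/2}\normm{u_0}_p$, together with the Duhamel formula $u(t)=T_\c(t)u_0+\int_0^t T_\c(t-s)\PP\,\nabla\cdot(u\otimes u+\cdots)\,\mathrm{d} s$ used in the local existence proof, gives $\normm{\nabla u(t)}_p\lesssim t^{-1/2}(\normm{u_0}_{L^1}+\normm{u_0}_{L^\infty})$ on $[0,T_0]$ once we bootstrap off the already-obtained $L^1\cap L^\infty$ bound on $u$; iterating this bootstrap (differentiating the Duhamel formula one or two further times and using that the Oseen semigroup is analytic, so $\partial_t T_\c(t)=\mathcal A T_\c(t)$ gains two derivatives at the cost of $t^{-1}$) yields the needed $\nabla^2 u$ and $\nabla^3 u$ bounds in $L^p$ for $1<p<\infty$. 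To propagate these to $p=1$ and $p=\infty$, I would apply the commutativity of $\mathbb P$ with $T_\c$ and the full $(p,q)$ range of Proposition~\ref{lpdecay} to directly extract the endpoint norms. Collecting the bounds, combining with the $L^q_v$ bounds on the Gaussian velocity profiles, and finally using Hölder $\normm{Q}_{L^q_x}\lesssim \normm{Q}_{L^1_x}^{1/q}\normm{Q}_{L^\infty_x}^{1-1/q}$ to interpolate the $2\le q\le \infty$ range from the $L^1\cap L^\infty$ endpoints, produces the desired inequality $\normm{h_i}_{L^p}\lesssim C$ with $C$ depending only on $\norm\c$ and $\normm{u_0}_{L^1\cap L^\infty}$, $\normm{\nabla u_0}_{L^1\cap L^\infty}$, for $i\in\{1/2,3/2\}$ and $2\le p\le \infty$.
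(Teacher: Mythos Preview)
Your overall strategy---factoring $h_i$ into Gaussian velocity profiles times spatial coefficients built from $u$ and its derivatives, and then controlling the latter via the Oseen semigroup---matches the paper's approach. However, there is a genuine gap in how you apply the semigroup.

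You write the Duhamel representation as $u(t)=T_\c(t)u_0+\int_0^t T_\c(t-s)\mathbb{P}\nabla\cdot(u\otimes u+\cdots)\,\dd s$ and propose to bootstrap derivative bounds from it. But the Oseen semigroup $T_\c$ in Proposition~\ref{lpdecay} is the solution operator for \eqref{oseeneq}, which carries the \emph{homogeneous} Dirichlet condition $u|_{\partial\Omega}=0$; the Navier--Stokes velocity $u$ appearing in $f_1,f_2$ satisfies instead $u|_{\partial\Omega}=-\c$. Hence this Duhamel formula is not valid for $u$ itself. The local existence argument in Section~\ref{nseqestimate} handles exactly this point by the splitting $u=z+w$: the stationary profile $w$ of \eqref{nseqw} absorbs the inhomogeneous boundary value, and it is $z$ (with $z|_{\partial\Omega}=0$) to which $T_\c$ and the Duhamel formula legitimately apply. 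The paper's proof of the proposition uses this split explicitly, invoking Galdi's steady theory for $w,\nabla w$ and the Oseen decay for $z,\nabla z$.

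The split also affects your interpolation step. Galdi only gives $w\in L^r$ for $r\ge 3$ and $\nabla w\in L^r$ for $r\ge 2$, so the $w$-contribution is not in $L^1$ and your $L^1$--$L^\infty$ endpoint interpolation cannot be run on $u$ as a whole; one must work directly in the $L^2$--$L^\infty$ range, which is all the statement requires. Once you route everything through $u=z+w$, treat $w$ via \cite{Galdi2011}, and apply your semigroup bootstrap to $z$ only, the rest of your outline goes through and coincides with the paper's (very terse) proof.
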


\begin{proof}
Recall the $w$ of the \eqref{nseqw}. According to the \cite{Galdi2011},  $\normm{w}_{L^{r}} \lesssim \norm{\c}$ for all $3 \le r \le \infty$ and $\normm{\nabla w}_{L^{r}} \lesssim \norm{\c} $ for all $2 \le r \le \infty$. According to the section \ref{nseqestimate} and \cite{Enomoto2005}, \cite{Kobayashi1998}, $\normm{z}_{L^{s}}, \normm{\nabla z}_{L^{s}} \lesssim C(\norm{\c}, \normm{z_0}_{L^r}, \normm{\nabla z_0}_{L^r})$ for $r > 1$. 
Since $u = z+w$ and by the definition of $f_1$ and $f_2$ (\eqref{deff1} and \eqref{deff2}) $\normm{h_{i}}_{L^p} \lesssim C(\normm{u_0}_{L^1}, \normm{u_0}_{L^\infty}, \norm{\c})$ for $2 \le p\le \infty$ holds.
\end{proof}

\subsection{Estimate of nonlinear term} \label{nonlinearterm}

In this section we will estimate the $\tilde{L}(f)$ and $\G(f,f)$. Proving the following proposition is the goal of this subsection.
\begin{proposition} \label{nonlinearestimate}
Let $g_i(t,x,v)$, $i=1,2$ is a smooth functions. Then,
\begin{align} \label{gg1g2}
\normm{\nu^{-1/2}\G(g_1,g_2)}_{L^2_{t,x,v}} \lesssim& \normm{\ip g_1}_{L^2_t L^2_\nu} \normm{\omega g_2}_{L^\infty_{t,x,v}} + \normm{S_1 g_1}_{L^2_t L^3_{x}} \normm{\P_\c g_2}_{L^\infty_t L^6_{x}}\\ \nonumber
&  + \normm{S_1 g_1}_{L^2_t L^3_{x}} \normm{\e^{-1} \ipc g_2}_{L^\infty_t L^2_\nu}^{1/3} \normm{\e^{1/2} \ipc g_2}_{L^\infty_t L^\infty_{x,v}}^{2/3}, \\
\label{gg2g1}
\normm{\nu^{-1/2} \G(g_2,g_1)}_{L^2_{t,x,v}} \lesssim& \normm{\ip g_1}_{L^2_t L^2_\nu} \normm{\omega g_2}_{L^\infty_{t,x,v}} + \normm{S_1 g_1}_{L^2_t L^3_{x}} \normm{\P_\c g_2}_{L^\infty_t L^6_{x}}\\ \nonumber
&  + \normm{S_1 g_1}_{L^2_t L^3_{x}} \normm{\e^{-1} \ipc g_2}_{L^\infty_t L^2_\nu}^{1/3} \normm{\e^{1/2} \ipc g_2}_{L^\infty_t L^\infty_{x,v}}^{2/3}, \\
\label{gg1g2inf}
\normm{\G(g_1,g_2)}_{L^\infty_{x,v}} \lesssim & \normm{\omega g_1}_{L^\infty_{x,v}} \normm{\omega g_2}_{L^\infty_{x,v}},
\end{align}
where $\omega(v) = exp(\b \norm{v}^2) $.
\end{proposition}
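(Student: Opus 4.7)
\textbf{Proof plan for Proposition \ref{nonlinearestimate}.}
The three bounds will be proven separately, with the two $L^2$ estimates sharing the same architecture.

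For the pointwise bound \eqref{gg1g2inf}, I insert the trivial estimate $\norm{g_i(v)}\le\omega^{-1}(v)\normm{\omega g_i}_{L^\infty_{x,v}}$ into the representation
\begin{align*}
\G(g_1,g_2)(v)=\int B(\omega,v-u)\sqrt{\mu(u)}\bigl[g_1(v')g_2(u')-g_1(v)g_2(u)\bigr]\,\dd\omega\,\dd u .
\end{align*}
Using the collisional invariant $\norm{v'}^2+\norm{u'}^2=\norm{v}^2+\norm{u}^2$, the gain-term product $\omega^{-1}(v')\omega^{-1}(u')$ collapses to $e^{-\b(\norm{v}^2+\norm{u}^2)}$, and for the hard-potential cross section the remaining integral in $u,\omega$ is $\lesssim\nu_\c(v)e^{-\b\norm{v}^2}$, uniformly bounded in $v$. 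The loss term is handled identically.

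For \eqref{gg1g2}, the starting point is the pointwise-in-$(t,x)$ bilinear bound
\begin{align*}
\normm{\nu^{-1/2}\G(f,g)}_{L^2_v}\lesssim\normm{f}_{L^2_\nu}\normm{\omega g}_{L^\infty_v}+\normm{\omega f}_{L^\infty_v}\normm{g}_{L^2_\nu},
\end{align*}
obtained from the same weight-transfer argument. Decomposing $g_1=\P g_1+\ip g_1$, the $\ip g_1$ contribution, after integration in $(t,x)$ against $\normm{\omega g_2}_\infty$, produces the first right-hand-side term $\normm{\ip g_1}_{L^2_t L^2_\nu}\normm{\omega g_2}_{L^\infty}$. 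For the hydrodynamic piece I write $\P g_1=(a_1+b_1\cdot\vc+c_1\norm{\vc}^2)\sqrt{\mu_\c}$ and apply Proposition \ref{l3estimate} to bound $\norm{a_1}+\norm{b_1}+\norm{c_1}\le S_1 g_1(t,x)+S_2 g_1(t,x)$. The $S_2 g_1$ contribution is controlled in $L^2_{t,x}$ by $\normm{\ipc g_1}_{L^2}$ and folds into the first right-hand-side term already produced.

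For the $S_1 g_1$ part I factor the velocity integration out of the collision kernel, which leaves a Gaussian-weighted integral of $g_2$ in $v$; in $x$ I use H\"older $L^3_x\cdot L^6_x\hookrightarrow L^2_x$. Splitting $g_2=\P_\c g_2+\ipc g_2$ completes the job: the hydrodynamic factor lands in $L^\infty_t L^6_x$ and yields the second right-hand-side term, while for $\ipc g_2$ I use the elementary Lebesgue interpolation $\normm{h}_{L^6_x}\le\normm{h}_{L^2_x}^{1/3}\normm{h}_{L^\infty_x}^{2/3}$ and distribute the neutral factor $1=\e^{1/3}\cdot\e^{-1/3}$ between the two outputs, producing exactly $\normm{\e^{-1}\ipc g_2}_{L^2}^{1/3}\normm{\e^{1/2}\ipc g_2}_{L^\infty}^{2/3}$. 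The symmetric estimate \eqref{gg2g1} comes from the second term of the bilinear bound above, applied to $\G(g_2,g_1)$ with the same decomposition of $g_1$ in its second slot.

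The main obstacle is the weight bookkeeping: the Gaussian $\sqrt{\mu_\c}$ coming from $\P g_1$ must be consumed by the $\omega^{-1}$ decay of $g_2$ so that the residual $v$-integral is finite and uniform in $(t,x)$; and the $S_2$-contribution to the hydrodynamic moments must be tracked carefully so that it absorbs into the $\ip g_1$ term rather than producing a spurious $\normm{\P g_1}_{L^2_x}$ on the right-hand side. Once these two points are arranged, the H\"older and interpolation steps are routine.
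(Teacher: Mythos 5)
Your proposal is correct and follows essentially the same route as the paper: split off the microscopic part of $g_1$ and control it with the weighted bilinear estimate (the paper's Lemma \ref{lemma52}/Corollary \ref{cor53}), bound the hydrodynamic coefficients of $g_1$ by $S_1 g_1 + S_2 g_1$ via Proposition \ref{l3estimate} with the $S_2$ piece absorbed into the microscopic term, then use H\"older $L^3_x\times L^6_x\to L^2_x$ together with the $L^2$--$L^\infty$ interpolation (your $\normm{h}_{L^6_x}\le\normm{h}_{L^2_x}^{1/3}\normm{h}_{L^\infty_x}^{2/3}$ is the square-root form of the paper's $L^3_x$ interpolation of the squared quantity) and the neutral $\e$-distribution. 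The only differences are presentational, so there is nothing to add.
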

\begin{remark}
It is important that the right-hand side of \eqref{gg1g2} and \eqref{gg2g1} are not symmetric; the key point is that the term $\normm{g_1}_{L^\infty}$ does not appear in them.
\end{remark}

\begin{lemma} \label{lemma52}
Let $g_i(t,x,v)$, $i=1,2,3$ be smooth functions. Then,
\begin{eqnarray*}
&&\int_{\R^3}\G(g_1,g_2)g_3 \dd v  \\
&&\lesssim \left[\int_{\R^3} \nu g_1^2 \dd v \right]^{1/2}\left[\int_{\R^3} g_2^2 \dd v \right]^{1/2} \left[\int_{\R^3} \nu g_3^2 \dd v \right]^{1/2}\\
&& + \left[\int_{\R^3}  g_1^2 \dd v \right]^{1/2} \left[\int_{\R^3} \nu g_2^2 \dd v \right]^{1/2} \left[\int_{\R^3} \nu g_3^2 \dd v \right]^{1/2}.
\end{eqnarray*}
\end{lemma}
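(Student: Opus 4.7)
The plan is to split $\G = \G^+ - \G^-$ using the identity $\sqrt{\mu_{\c}(v')\mu_{\c}(u')} = \sqrt{\mu_{\c}(v)\mu_{\c}(u)}$ (conservation of energy in collisions) to cancel the outer $\mu_{\c}^{-1/2}$ factor, which yields
\begin{align*}
\G^-(g_1,g_2)(v) &= g_1(v)\iint B(\omega,v-u)\sqrt{\mu_{\c}(u)}\,g_2(u)\,\dd\omega\,\dd u,\\
\G^+(g_1,g_2)(v) &= \iint B(\omega,v-u)\sqrt{\mu_{\c}(u)}\,g_1(v')g_2(u')\,\dd\omega\,\dd u.
\end{align*}
The trilinear form $\int \G^{\pm}(g_1,g_2) g_3\,\dd v$ is then treated piece by piece via Cauchy--Schwarz, combined, for the gain term, with the collision change of variables.

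For the loss term, I apply Cauchy--Schwarz in $(u,\omega)$ pointwise in $v$ with the weight split $\sqrt{\mu_{\c}(u)} = \mu_{\c}(u)^{1/4}\cdot\mu_{\c}(u)^{1/4}$. The collision-frequency bound $\iint B\,\mu_{\c}(u)^{1/2}\dd\omega\dd u \lesssim \nu_{\c}(v)$ together with the pointwise estimate $\iint B\,\mu_{\c}(u)^{1/2} g_2(u)^2\dd\omega\dd u \lesssim \nu_{\c}(v)\|g_2\|_{L^2}^2$ (the remaining Gaussian absorbing the polynomial growth of $B$ in $u$) combine to give $|\iint B\sqrt{\mu_{\c}(u)}\,g_2(u)\dd\omega\dd u| \lesssim \nu_{\c}(v)\|g_2\|_{L^2}$. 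Multiplying by $g_1(v)g_3(v)$, integrating in $v$, and applying Cauchy--Schwarz produces the first term $\|\nu_{\c}^{1/2}g_1\|_2\|g_2\|_2\|\nu_{\c}^{1/2}g_3\|_2$ on the right-hand side; the symmetric term is obtained by pairing the $\nu_{\c}$ weight with $g_2$ rather than $g_1$ in the outer Cauchy--Schwarz.

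For the gain term, the same splitting $\sqrt{\mu_{\c}(u)} = \mu_{\c}(u)^{1/4}\cdot\mu_{\c}(u)^{1/4}$ followed by Cauchy--Schwarz in $(v,u,\omega)$ reduces the estimate to a product of two integrals of the schematic form $\iiint B\mu_{\c}(u)^{1/2} g_i(\cdot)^2\,\dd\omega\dd u\dd v$, one with $g_1(v')^2 g_3(v)^2$ and one with $g_2(u')^2$. Each is handled by the involutive collision change of variables $(v,u) \leftrightarrow (v',u')$ at fixed $\omega$, which has unit Jacobian and preserves $B$; this transforms $g_1(v')^2$ and $g_2(u')^2$ into $g_1(v)^2$ and $g_2(u)^2$ respectively, after which the residual $(u,\omega)$-integration of $B\mu_{\c}^{1/2}$ again produces a factor of $\nu_{\c}$ in the appropriate variable. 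The main technical obstacle is the Maxwellian bookkeeping under these changes: the naive identity $\mu_{\c}(u) = \mu_{\c}(v')\mu_{\c}(u')/\mu_{\c}(v)$ introduces a dangerous $\mu_{\c}(v)^{-1}$ factor, but the $1/4$-power splitting leaves enough Gaussian decay on the post-collision side to absorb this divisor at the cost of an arbitrarily small loss of exponent, after which the remaining integrand combines with the outer $g_3^2$ to yield $\|\nu_{\c}^{1/2}g_3\|_2^2$ and the two advertised forms $\|\nu_{\c}^{1/2}g_1\|_2\|g_2\|_2$ and $\|g_1\|_2\|\nu_{\c}^{1/2}g_2\|_2$, depending on which of the two factors receives the additional $\mu_{\c}^{1/2}(u)$-weight before the change of variables.
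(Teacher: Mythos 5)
Your reduction of $\G$ to the two pieces $\G^{\pm}$ and your treatment of the loss term are correct: Cauchy--Schwarz in $(u,\omega)$ with the split $\sqrt{\mu_{\c}(u)}=\mu_{\c}(u)^{1/4}\mu_{\c}(u)^{1/4}$ indeed gives $\norm{\iint B\sqrt{\mu_{\c}(u)}\,g_2(u)\,\dd\omega\,\dd u}\lesssim \nu_{\c}(v)\normm{g_2}_{L^2_v}$, and integrating against $g_1g_3$ yields the first term on the right-hand side. The gain term, however, is where the proposal fails. With your pairing (one Cauchy--Schwarz factor carrying $g_1(v')^2g_3(v)^2$, the other carrying $g_2(u')^2$), the second factor is $\iiint B(\omega,v-u)\,\mu_{\c}(u)^{1/2}\,g_2(u')^2\,\dd\omega\,\dd u\,\dd v$, and this integral is infinite: $u'=u+[\omega\cdot(v-u)]\omega$ depends on $v$ only through $\omega\cdot v$, and $\mu_{\c}(u)^{1/2}$ does not depend on $v$ at all, so for fixed $u,\omega$ the integrand has no decay in the two directions of $v$ perpendicular to $\omega$, while $B$ grows (or at best remains of order one after the angular integration); the $v$-integral diverges. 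The same mismatch is what drives you to the ``dangerous $\mu_{\c}(v)^{-1}$'' discussion: because your pairing mixes a pre-collisional argument ($g_3(v)$) with a post-collisional one ($g_1(v')$), the involution $(v,u)\leftrightarrow(v',u')$ cannot normalize both squared factors simultaneously, and redistributing quarter powers of the Maxwellian cannot absorb a factor $e^{+\norm{v}^2/2}$ ``up to a small loss of exponent''; that step is not justified and cannot be.

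The repair -- which is the actual proof of Lemma 2.3 of \cite{Guo2002}, the paper itself only citing that result without argument -- is to keep the two post-collisional factors together: bound the gain term by $\bigl[\iiint B\,\mu_{\c}(u)^{1/2}g_1(v')^2g_2(u')^2\bigr]^{1/2}\bigl[\iiint B\,\mu_{\c}(u)^{1/2}g_3(v)^2\bigr]^{1/2}$. The second factor is $\lesssim\int\nu_{\c}\,g_3^2\,\dd v$ directly, with no change of variables. In the first factor the substitution $(v,u)\to(v',u')$ (unit Jacobian, $B$ invariant) turns both squared arguments into integration variables at once; the leftover Maxwellian is then evaluated at a post-collisional velocity and is simply bounded by a constant, after which the angular cutoff and the subadditivity $\norm{v-u}^{\th}\le\norm{v}^{\th}+\norm{u}^{\th}\lesssim\nu_{\c}(v)+\nu_{\c}(u)$ (valid for $0\le\th\le1$) give $\int\nu_{\c}g_1^2\int g_2^2+\int g_1^2\int\nu_{\c}g_2^2$. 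Note this is precisely where the sum of two terms on the right-hand side of the lemma comes from, a structure your sketch never accounts for; and with this pairing the identity $\mu_{\c}(u)=\mu_{\c}(v')\mu_{\c}(u')/\mu_{\c}(v)$ is never needed.
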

\begin{proof}
This is the Lemma 2.3 in  \cite{Guo2002}.
\end{proof}

\begin{corollary} \label{cor53}
Let $\omega_{\b}(v) = exp(\b \norm{v}^2) $ then,
$$
\normm{\nu^{-1/2} \G(g_1,g_2)}_{2} \lesssim \normm{g_1}_{\nu} \normm{\omega_{\b} g_2}_{\infty}
$$
$$
\normm{\nu^{-1/2} \G(g_1,g_2)}_{2} \lesssim  \normm{\omega_{\b} g_1}_{\infty} \normm{g_2}_{\nu}
$$
for all $\b>0$.
\end{corollary}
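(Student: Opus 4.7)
The two inequalities are symmetric in $g_1,g_2$, so I will describe only the first; the second follows by swapping the two arguments of $\G$ and repeating the argument. The plan is to combine duality with Lemma~\ref{lemma52} and a Gaussian-weight absorption.

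First, I would dualize in $L^2_v$: with $g_3$ ranging over test functions satisfying $\normm{g_3}_2 \le 1$,
$$
\normm{\nu^{-1/2}\G(g_1,g_2)}_2 \;=\; \sup_{\normm{g_3}_2 \le 1} \int \G(g_1,g_2)\, h \, \dd v, \qquad h := \nu^{-1/2} g_3.
$$
The point of this substitution is that $\normm{h}_\nu^2 = \int \nu \cdot \nu^{-1} g_3^2 \,\dd v = \normm{g_3}_2^2 \le 1$, so $h$ sits in exactly the norm that appears in the third slot of Lemma~\ref{lemma52}.

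Second, Lemma~\ref{lemma52} applied to $\int \G(g_1,g_2) h\, \dd v$ yields
$$
\int \G(g_1,g_2)\, h \, \dd v \;\lesssim\; \normm{g_1}_\nu \normm{g_2}_2 \normm{h}_\nu + \normm{g_1}_2 \normm{g_2}_\nu \normm{h}_\nu \;\le\; \normm{g_1}_\nu \normm{g_2}_2 + \normm{g_1}_2 \normm{g_2}_\nu .
$$

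Third, I would use the Gaussian factor $\omega_\b^{-2} = e^{-2\b\norm{v}^2}$ to trade the unweighted and $\nu$-weighted $L^2_v$ norms of $g_2$ for the weighted $L^\infty$ norm. Since $\nu(v)\lesssim \inn{v}^\th$ with $\th \le 1$, for any $\b>0$
$$
\normm{g_2}_2 + \normm{g_2}_\nu \;\lesssim\; \normm{\omega_\b g_2}_\infty \left( \int (1+\nu(v))\, e^{-2\b\norm{v}^2} \,\dd v \right)^{1/2} \;\lesssim_\b\; \normm{\omega_\b g_2}_\infty.
$$
The remaining factor $\normm{g_1}_2$ in the second summand above is then controlled by $\normm{g_1}_\nu$ via the standard pointwise lower bound $\nu(v)\gtrsim 1$ valid for hard potentials with Grad cutoff. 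Combining these estimates yields the first inequality; the second is proved identically with the roles of $g_1$ and $g_2$ interchanged.

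There is no deep obstacle here: the proof is essentially an algebraic rearrangement of Lemma~\ref{lemma52}, and the only point worth flagging is that the Gaussian moment $\int \nu(v)\, e^{-2\b\norm{v}^2}\,\dd v$, while finite for every fixed $\b>0$, blows up as $\b\to 0$. This is harmless because the implicit constant in the corollary is permitted to depend on $\b$.
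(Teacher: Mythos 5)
Your proof is correct and essentially the paper's own argument: the paper applies Lemma~\ref{lemma52} with the choice $g_3=\nu^{-1}\G(g_1,g_2)$ (a self-testing that plays the role of your duality step) and then performs the same Gaussian absorption $\int (1+\nu)e^{-2\b\norm{v}^2}\dd v<\infty$ together with $\nu\gtrsim 1$, so the two routes differ only cosmetically. One small remark: the corollary's second inequality is for the same quantity $\G(g_1,g_2)$ with the $L^\infty$ weight moved onto $g_1$, not for $\G(g_2,g_1)$, but this is harmless since your step-two bound already contains both terms $\normm{g_1}_{\nu}\normm{g_2}_{2}$ and $\normm{g_1}_{2}\normm{g_2}_{\nu}$, so you simply absorb the Gaussian into $g_1$ instead of $g_2$.
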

\begin{proof}
Let's put $g_3 = \nu^{-1} \G(g_1,g_2)$ to the lemma \ref{lemma52} then we can get,
\begin{eqnarray*}
&&\int_{\R^3}\nu^{-1} \G(g_1,g_2)^2 \dd v  \\
&&\lesssim \left[\int_{\R^3} \nu g_1^2 \dd v \right]^{1/2}\left[\int_{\R^3} g_2^2 \dd v \right]^{1/2} \left[\int_{\R^3}  \nu^{-1} \G(g_1,g_2)^2 \dd v \right]^{1/2}\\
&& + \left[\int_{\R^3}  g_1^2 \dd v \right]^{1/2} \left[\int_{\R^3} \nu g_2^2 \dd v \right]^{1/2} \left[\int_{\R^3} \nu^{-1} \G(g_1,g_2)^2 \dd v \right]^{1/2}.
\end{eqnarray*}
In addition,
\begin{eqnarray*}
\int_{\R^3} \nu f^2 \dd v &\le& \int_{\R^3} \omega_{\b}^{-2} \nu (\omega_{\b} f)^2 \dd v \\
&\le& \normm{\omega_{\b} f}_{\infty}^2 \int_{\R^3} \omega_{\b}^{-2} \nu \dd v \\
&\le&  C(\b) \normm{\omega_{\b} f}_{\infty}^2.
\end{eqnarray*}
So,
\begin{eqnarray*}
\normm{\nu^{-1/2} \G(g_1,g_2)}_{2}^{2} &=& \iint\nu^{-1} \G(g_1,g_2)^2 \dd v dx\\
&\le& \int\left[ \int_{\R^3} \nu g_1^2 \dd v \int_{\R^3} g_2^2 \dd v + \int_{\R^3}  g_1^2 \dd v \int_{\R^3} \nu g_2^2 \dd v \right] dx\\
&\lesssim& \min\left\{ \normm{g_1}_{\nu} \normm{\omega_{\b} g_2}_{\infty},  \normm{\omega_{\b} g_1}_{\infty} \normm{g_2}_{\nu} \right\}^{2}.
\end{eqnarray*}
\end{proof}

\begin{proof} (proof of Proposition \ref{nonlinearestimate})
First,
\begin{align*}
\normm{\G(g_1,g_2)}_{L^\infty_{x,v}} \lesssim & \normm{\omega g_1}_{L^\infty_{x,v}} \normm{\omega g_2}_{L^\infty_{x,v}}
\end{align*}
is straightforward by the definition of $\G$.
To get $L^2$ estimate, we can decompose $\G(g_1,g_2)$ and  $\G(g_2,g_1)$ into
\begin{align*}
\G(g_1,g_2) =& \G(\P_\c g_1, \P_\c g_2) + \G(\P_\c g_1, \ipc g_2) + \G(\ipc g_1, g_2), \\
\G(g_2,g_1) =& \G(\P_\c g_2, \P_\c g_1) + \G(\ipc g_2, \P_\c g_1) + \G(g_2, \ipc g_1)
.
\end{align*}
According to the corollary \ref{cor53}
\begin{align*}
\normm{\nu^{-1/2}\G(g_2, \ipc g_1)}_{L^2_{x,v}} &\lesssim \normm{\ipc g_1}_{\nu} \normm{\omega g_2}_{L^\infty_{x,v}}, \\
\normm{\nu^{-1/2} \G(\ipc g_1, g_2)}_{L^2_{x,v}} &\lesssim \normm{\ipc g_1}_{\nu} \normm{\omega g_2}_{L^\infty_{x,v}}.
\end{align*}
We need to estimate $\G(\P_\c g_1, \ipc g_2)$ and $\G(\ipc g_2, \P_\c g_1)$ without $\normm{g_1}_{\infty}$.
Since $\P_\c$ is a projection onto $\ker L_\c$, the following relation holds:
\begin{align*}
\int_{\R^3} \nu \P_\c g_1^2 \dd v \sim 
\int_{\R^3} \P_\c g_1^2 \dd v
\sim \P_\c g_1^2.
\end{align*}
By Lemma \ref{lemma52},
\begin{align*}
\int_{\R^3}\nu^{-1} {\G(\P_\c g_1, \ipc g_2)}^2 \dd v \lesssim& \int_{\R^3} \nu \P_\c g_1^2 \dd v \int_{\R^3} \ipc g_2^2 \dd v \\
&+ \int_{\R^3}  \P_\c g_1^2 \dd v \int_{\R^3} \nu \ipc g_2^2 \dd v \\
\lesssim &  \P_\c g_1^2  \int_{\R^3} \nu \ipc g_2^2 \dd v,\\
\int_{\R^3}\nu^{-1} {\G(\ipc g_2,\P_\c g_1)}^2 \dd v \lesssim&  \P_\c g_1^2 \int_{\R^3} \nu \ipc g_2^2 \dd v.
\end{align*}
By Proposition \ref{l3estimate},
\begin{align*}
\normm{\nu^{-1/2}\G(\P_\c g_1, \ipc g_2)}_{L^2_t, L^2_{x,v}}^2 =& \iint_{[0,T] \times \Omega} \int_{\R^3}\nu^{-1} {\G(\P_\c g_1, \ipc g_2)}^2 \dd v \dd x \dd t \\
\lesssim & \iint_{[0,T] \times \Omega} \left[ \P_\c g_1^2 \int_{\R^3} \nu \ipc g_2^2 \dd v \right] \dd x \dd t \\
\lesssim & \iint_{[0,T] \times \Omega} \left[ S_1 g_1^2 \int_{\R^3} \nu \ipc g_2^2 \dd v \right] \dd x \dd t \\
& + \iint_{[0,T] \times \Omega} \left[S_2 g_1^2 \int_{\R^3} \nu \ipc g_2^2 \dd v \right] \dd x \dd t.
\end{align*}
\begin{align*}
\iint_{[0,T] \times \Omega} \left[ S_2 g_1^2 \int_{\R^3} \nu \ipc g_2^2 \dd v \right] \dd x \dd t &\lesssim \iint_{[0,T] \times \Omega} \left[ S_2 g_1^2  \sup_{v}(\omega g_2)^2 \right] \dd x \dd t \\
&\lesssim \normm{S_2 g_1}_{L^2_t L^2_{x}}^2 \normm{ \omega g_2}_{L^\infty_{t}L^\infty_{x,v}}^2 \\
& \lesssim
\normm{\ipc g_1}_{L^2_t L^2_{x,v}}^2 \normm{ \omega g_2}_{L^\infty_{t} L^\infty_{x,v}}^2.
\end{align*}
By Holder inequality,
\begin{align*}
&\iint_{[0,T] \times \Omega} \left[ S_1 g_1^2 \int_{\R^3} \nu \ipc g_2^2 \dd v \right] \dd x \dd t \\
&\lesssim  \int_{[0,T]}  \left(\int_{\Omega} S_1 g_1^3 \dd x \right)^{2/3} \left( \int_{\Omega} \left(  \int_{\R^3} \nu \ipc g_2^2 \dd v \right)^3 \dd x \right)^{1/3} \dd t \\
& \lesssim  \int_{[0,T]}  \left(\int_{\Omega} S_1 g_1^3 \dd x \right)^{2/3} \left(\normm{\omega g_2}_{L^\infty_{x,v}}^4  \normm{\ipc g_2}_{\nu}^2\right)^{1/3} \dd t \\ 
&\lesssim \normm{S_1 g_1}^2_{L^2_{t}L^{3}_{x}} \normm{\ipc g_2}_{L^{\infty}_t L^2_{\nu}}^{2/3} \normm{\omega g_2}^{4/3}_{L^{\infty}_{t,x,v}}.
\end{align*}
Since $\P_\c g_1$ and $\P_\c g_2$ is in the $\ker L_\c$,
\begin{align*}
\normm{\nu^{-1/2}\G(\P_\c g_1, \P_\c g_2)}_{L^2_{t} L^2_{x,v}} \lesssim  &
\normm{\P_\c g_1 \P_\c g_2}_{L^2_{t} L^2_{x,v}} \\
\lesssim & \normm{\ipc g_1}_{L^2_t L^2_{\nu}} \normm{\omega g_2}_{L^\infty_{t,x,v}} + \normm{S_1 g_1}_{L^2_t L^3_{x}} \normm{\P_\c g_2}_{L^\infty_t L^6_{x}}
\end{align*}
\end{proof}

\begin{corollary} \label{cor22}
The following inequalities holds:
\begin{align*}
\normm{\tilde{L}(f)}_2 \lesssim& \normm{f_1 + \e f_2}_{\infty} \normm{f}_2, \\
\normm{\pt_t \tilde{L}(f)}_2 \lesssim & \normm{f_1 + \e f_2}_{\infty} \normm{\pt_t f}_2 +\normm{\pt_t f_1 + \e \pt_t f_2}_{\infty} \normm{f}_2, \\
\int_0^t \normm{\G(f,f)}_2^2
\lesssim& \normm{S_1 f}_{L^2_t, L^3_{x}}^2 \normm{\P_\c f}_{L^\infty_t, L^6_{x}}^2 +\e\normm{\e^{-1}\ipc f}_{L^2_t, L^2_{x,v}}^2 \normm{\e^{1/2} \omega f}_{L^\infty_{t}, L^\infty_{x,v}}^2 \\
&+ \normm{S_1 f}_{L^2_t, L^3_{x}}^2 \normm{\e^{-1} \ipc f}_{L^\infty_t, L^2_{x}}^{2/3} \normm{\e^{1/2} \omega f}_{L^\infty_t, L^\infty_{x}}^{4/3}, \\
\int_0^t \normm{\pt_t \G(f,f)}_2^2 \lesssim&  \normm{S_1 \pt_t f}_{L^2_t, L^3_{x}}^2 \normm{\P_\c f}_{L^\infty_t, L^6_{x}}^2 +\e\normm{\e^{-1}\ipc \pt_t f}_{L^2_t, L^2_{x,v}}^2 \normm{\e^{1/2} \omega f}_{L^\infty_{t}, L^\infty_{x,v}}^2 \\
&+ \normm{S_1 \pt_t f}_{L^2_t, L^3_{x}}^2 \normm{\e^{-1} \ipc f}_{L^\infty_t, L^2_{x}}^{2/3} \normm{\e^{1/2} \omega f}_{L^\infty_t, L^\infty_{x}}^{4/3}  
\end{align*}
\end{corollary}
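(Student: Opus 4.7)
The plan is to derive each of the four inequalities directly from Corollary~\ref{cor53} (for the $\tilde L$--terms) or Proposition~\ref{nonlinearestimate} (for the $\G(f,f)$--terms), together with the time product rule and a small amount of $\e$--rescaling.

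For the first estimate, I would expand $\tilde L(f)=\G(f_1+\e f_2,f)+\G(f,f_1+\e f_2)$. Because $f_1$ and $f_2$ in \eqref{deff1}--\eqref{deff2} both carry a $\sqrt{\mu_\c}$ factor, for any sufficiently small $\b$ one has $\normm{\omega(f_1+\e f_2)}_\infty \lesssim \normm{f_1+\e f_2}_\infty$. Applying Corollary~\ref{cor53} to each of the two pieces, placing the weighted $L^\infty$ norm on the Gaussian--decaying slot and the $\normm{\cdot}_\nu$ norm on the $f$ slot, yields the first inequality. The second follows by the same argument after the product rule
\[
\pt_t\tilde L(f)=\G(\pt_t(f_1+\e f_2),f)+\G(f_1+\e f_2,\pt_t f)+\G(\pt_t f,f_1+\e f_2)+\G(f,\pt_t(f_1+\e f_2)),
\]
again placing the $L^\infty$ slot on whichever factor carries the Gaussian weight.

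For the third inequality, I would apply Proposition~\ref{nonlinearestimate} with $g_1=g_2=f$ and square the resulting three--term bound before integrating in $t$. The three terms produced on the right match the shape of the target almost exactly, up to two cosmetic manipulations: first, the mixed term is rewritten
\[
\normm{\ipc f}_{L^2_t L^2_\nu}^2\,\normm{\omega f}_{L^\infty}^2=\e\,\normm{\e^{-1}\ipc f}_{L^2_t L^2_\nu}^2\,\normm{\e^{1/2}\omega f}_{L^\infty}^2;
\]
second, $\normm{\e^{1/2}\ipc f}_{L^\infty}$ is replaced by $\normm{\e^{1/2}\omega f}_{L^\infty}$ using $\normm{\omega \P_\c f}_\infty\lesssim \normm{\omega f}_\infty$, which holds because $\P_\c f$ has a Gaussian velocity factor so the projection is bounded in the weighted $L^\infty$ norm.

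For the fourth inequality the product rule gives $\pt_t\G(f,f)=\G(\pt_t f,f)+\G(f,\pt_t f)$. To the first piece I would apply \eqref{gg1g2} with $g_1=\pt_t f$ and $g_2=f$, and to the second I would apply \eqref{gg2g1} with the same choice; this way $\pt_t f$ always sits in the $\ipc$ and $S_1$ slots that are integrated in $t$, while $f$ sits in the $L^\infty_t$ slots. The main technical issue in the whole argument is the $\e$--bookkeeping in the mixed--term rewriting above, together with checking the projection--to--weight bound $\normm{\omega \P_\c f}_\infty\lesssim \normm{\omega f}_\infty$; once these two are in place each of the four inequalities is an immediate consequence of Corollary~\ref{cor53} or Proposition~\ref{nonlinearestimate}.
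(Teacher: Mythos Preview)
Your proposal is correct and matches the paper's intent: the paper gives no proof for this corollary, presenting it immediately after Proposition~\ref{nonlinearestimate} as a direct consequence of that proposition together with Corollary~\ref{cor53} and the Leibniz rule, which is precisely what you do. Your observation that \eqref{gg1g2} and \eqref{gg2g1} should both be applied with $g_1=\pt_t f$, $g_2=f$ (so that the time-differentiated factor always lands in the $L^2_t$ slots) is exactly the point of the asymmetric formulation in Proposition~\ref{nonlinearestimate}.
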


\section{Proof of the main theorem}
The aim of this section is to prove the main result (Theorem \ref{mainthm}) of this paper.

\begin{definition}
Define dissipation and energy as
\begin{eqnarray*}
e(t)[f]&:=& \normm{f}^2_{L^2}(t) + \normm{\pt_t f}^2_{L^2}(t), \\
\mathscr{E}(T)[f] &:=& \sup_{0 \le t \le T} \normm{f}^2_{L^2}(t) + \sup_{0 \le t \le T} \normm{\pt_t f}^2_{L^2}(t), \\
\mathscr{D}(T)[f] &:=& \frac{1}{\e^2} \int_{0}^{T} \normm{\ip f}_{\nu}^2 \dd t + \frac{1}{\e^2} \int_{0}^{T} \normm{\pt_t \ip f}_{\nu}^2 \dd t \\
&& + \frac{1}{\e} \int_{0}^{T} \norm{(1-P^{\mathbf{u}}_{\g}) f}^2 \dd t + \frac{1}{\e} \int_{0}^{T} \norm{(1-P^{\mathbf{u}}_{\g}) \pt_t f}^2 \dd t.
\end{eqnarray*}
\end{definition}

\begin{proof}
Consider ${f}^l$ solving, for $l \in \mathbb{N}$,
\begin{align}
&\e \pt_t f^{l+1} + v \cdot \nabla_x f^{l+1} + \e^{-1}L(f^{l+1}) = h + \tilde{L}(f^{l}) + \e^{1/2} \G(f^{l},f^{l})\\
&f^{l+1}_{\g_-} = \mathscr{P}^{\c}_{\g} f^{l+1} + \e^{1/2} r, \quad f^{l+1} \big|_{t=0} =f_0.
\end{align}
Here we set : $f^0(t,x,v) := 0$
Take the time integration to Proposition \ref{energy} and use the Corollary \ref{cor22} then we can get:
\begin{align} \label{eq75}
&e[f^{l+1}](t) + \mathscr{D}[f^{l+1}](t) \\ \nonumber
& \lesssim 
\normm{f_0}^2_2 + \normm{\pt_t f_0}^2_2 + \normm{f_0}^2_\infty + \normm{\pt_t f_0}^2_\infty  + \e^{-2} \normm{\ipc f_0}^2_{\nu} + \e^{-1} \normm{(1-\mathscr{P}^{\c}_{\g}) f_0}^2_{2,+} \\ \nonumber
&+ (1+\e^{1/2})(\norm{ r}_{L^2_{t},2,-}^2 + \norm{\pt_t r}_{L^2_{t}, 2,-}^2)
+ (\e\norm{\c})^{-1}(\normm{z_\g( r)}_{L^2_{t}L^2_{x,v}}^2 + \normm{z_\g(\pt_t r)}_{L^2_{t}L^2_{x,v}}^2) \\ \nonumber
& + \normm{\P_\c h_{3/2}}_{L^2_{t}L^2_{x,v} } + \normm{\P_\c \pt_t h_{3/2}}_{L^2_{t} L^2_{x,v}}+ \normm{\ipc h}_{L^2_{t}L^2_{x,v}}^2  + \normm{\ipc \pt_t h}_{L^2_{t}L^2_{x,v}}^2 \\ \nonumber
& + (\normm{f_1 + \e f_2}_{L^\infty_{t,x,v}}^2 + \normm{\pt_t f_1 + \e \pt_t f_2}_{L^\infty_{t,x,v}}^2) \int_0^t e[f^{l}](t)\dd t \\ \nonumber
& +\e^{} (\normm{S_1 f^{l}}_{L^2_t L^3_{x,v}}^2+\normm{\pt_t S_1 f^{l}}_{L^2_t L^3_{x,v}}^2) \normm{\P_{\c} f^{l}}_{L^\infty_t L^6_{x,v}}^2 \\ \nonumber
& +\e^{2} \normm{\e^{-1} \pt_t \ipc \e^{1/2} f^{l}}_{L^2_{t,x,v}}^2 \normm{f^{l}}_{L^\infty_{t,x,v}}^2 \\ \nonumber
&+ \e^{} (\normm{S_1 f^{l}}_{L^2_t L^3_{x,v}}^2 + \normm{\pt_t S_1 f^{l}}_{L^2_t L^3_{x,v}}^2) \normm{\e^{-1} \ipc f^{l}}_{L^\infty_t L^2_{x,v}}^{2/3} \normm{\e^{1/2} f^{l}}_{L^\infty_{t,x,v}}^{4/3}.
\end{align}
By \eqref{eq27} and Proposition \ref{smallh} the following term is bounded by $C_{\text{initial}}$,
\begin{eqnarray*}
&&
(1+\e^{\frac 1 2})(\norm{r}_{2,-}^2 + \norm{\pt_t r}_{2,-}^2) + (\e\norm{\c})^{-1}(\normm{z_\g(r)}_2^2 + \normm{z_\g(\pt_t r)}_2^2) \\
&& + \e^{-2}\normm{P_\c h}_2^2 + \e^{-2} \normm{P_\c \pt_t h}_2^2 + \normm{h}_2^2+ \normm{\pt_t h}_2^2 \\
&& + \normm{f_1+ \e f_2}^2_{\infty} +\normm{\pt_t f_1+ \e \pt_t f_2}^2_{\infty} +  \norm{\c},
\end{eqnarray*}
if we set $C_{\text{initial}}$ as 
\begin{eqnarray*}
C_{\text{initial}} := \max \{ \normm{u_0}, \normm{\nabla u_0}, \normm{f_0}, \normm{\pt_t u_0}, \normm{\pt_t \nabla u_0}, \normm{\pt_t f_0}, \norm{\c}\},
\end{eqnarray*}
where $\normm{\cdot}$ implies both $L^\infty$ and $L^1$ norm and $\pt_t$ came from the equation.

Integrating \eqref{eq75} by time from $0$ to $T$, then we can get,
\begin{align*} 
&e[f^{l+1}](T) + \mathscr{D}[f^{l+1}](T)  \\
&\le
\int_{0}^T C_{\text{initial}} \dd s + C_{\text{initial}}  \int_{0}^T e[f^{l}](s) \dd s \\
& +c_1\e^{} (\normm{S_1 f^{l}}_{L^2_t L^3_{x,v}}^2+\normm{\pt_t S_1 f^{l}}_{L^2_t L^3_{x,v}}^2) \normm{\P_{\c} f^{l}}_{L^\infty_t L^6_{x,v}}^2 \\
& +c_1\e^{2} \normm{\e^{-1} \pt_t \ipc \e^{1/2} f^{l}}_{L^2_{t,x,v}}^2 \normm{f^{l}}_{L^\infty_{t,x,v}}^2 \\
&+ c_1\e^{} (\normm{S_1 f^{l}}_{L^2_t L^3_{x,v}}^2 + \normm{\pt_t S_1 f^{l}}_{L^2_t L^3_{x,v}}^2) \normm{\e^{-1} \ipc f^{l}}_{L^\infty_t L^2_{x,v}}^{2/3} \normm{\e^{1/2} f^{l}}_{L^\infty_{t,x,v}}^{4/3},
\end{align*}
where $c_1$ is a fixed constant. 

By the Proposition \ref{linfestimate},
\begin{align*}
\normm{\e^{1/2} \omega f^{l+1}(t)}_{\infty}^2 \le& c_2 \normm{\e^{1/2} \omega f_0}_{\infty}^2 + c_2\sup_{0\le s\le t } \normm{\e^{1/2} \omega r(s)}_{\infty}^2 + c_2\e^{3} \sup_{0\le s\le t } \normm{ \inn{v}^{-1} \omega g(s)}_{\infty}^2 \\
&+ c_2\normm{\P_\c f^l}_{L^\infty_t L^{6}_{x}}^2 + c_2\e^{-2} \normm{ \ipc f}_{L^\infty_t \nu}^2 \\
\le& \int_{0}^T C_{\text{initial}} \dd s + \mathscr{D}[f^{l}](t)+ c_2\normm{\P_\c f^l}_{L^\infty_t L^{6}_{x}}^2 + C_{\text{initial}}  \int_{0}^{t}e(s)[f^{l}] \dd s  \\
& +c_2\e^{} (\normm{S_1 f^{l}}_{L^2_t L^3_{x,v}}^2+\normm{\pt_t S_1 f^{l}}_{L^2_t L^3_{x,v}}^2) \normm{\P_{\c} f^{l}}_{L^\infty_t L^6_{x,v}}^2 \\
& +c_2\e^{2} \normm{\e^{-1} \pt_t \ipc \e^{1/2} f^{l}}_{L^2_{t,x,v}}^2 \normm{f^{l}}_{L^\infty_{t,x,v}}^2 \\
&+ c_2\e^{} (\normm{S_1 f^{l}}_{L^2_t L^3_{x,v}}^2 + \normm{\pt_t S_1 f^{l}}_{L^2_t L^3_{x,v}}^2) \normm{\e^{-1} \ipc f^{l}}_{L^\infty_t L^2_{x,v}}^{2/3} \normm{\e^{1/2} f^{l}}_{L^\infty_{t,x,v}}^{4/3}.
\end{align*}
where $c_2$ is a fixed constant. 

By Proposition \ref{l6estimate} the $L^6$ bound is
\begin{align*}
\normm{\P_\c f^{l+1}}_{L^\infty_t L^{6}_{x}}^2 \le& c_3 \e^{-2}\normm{\ipc f^{l}}_{L^\infty_t L^2_{\nu}}^2+
c_3 \normm{ g{{\nu}^{-\frac 1 2}}}_{L^\infty_t L^2_{x,v}}^2 + c_3 \e^2 \normm{\pt_t f^{l+1}}_{L^\infty_t L^2_{x,v}}^2 \\
&+ c_3 \e^{-1}\norm{(1-\mathscr{P}^{\c}_{\g})f^{l}}_{L^\infty_t  2,+}^2 + c_3 \e \norm{r}_\infty^2+ o(1)[\e^{1/2}\normm{f^{l}}_{\infty}]^2 \\
\le& \int_{0}^T C_{\text{initial}} \dd s + \mathscr{D}[f^{l}](t) + \e c_3 E[f^{l}](T) + o(1) \normm{\e^{1/2} f^{l}}_{L^\infty_{t,x,v}}\\
& + C_{\text{initial}}  \int_{0}^{t}e(s)[f^{l}] \dd s  \\
& + c_3 \e^{} (\normm{S_1 f^{l}}_{L^2_t L^3_{x,v}}^2+\normm{\pt_t S_1 f^{l}}_{L^2_t L^3_{x,v}}^2) \normm{\P_{\c} f^{l}}_{L^\infty_t L^6_{x,v}}^2 \\
& + c_3 \e^{2} \normm{\e^{-1} \pt_t \ipc \e^{1/2} f^{l}}_{L^2_{t,x,v}}^2 \normm{f^{l}}_{L^\infty_{t,x,v}}^2 \\
& + c_3 \e^{} (\normm{S_1 f^{l}}_{L^2_t L^3_{x,v}}^2 + \normm{\pt_t S_1 f^{l}}_{L^2_t L^3_{x,v}}^2) \normm{\e^{-1} \ipc f^{l}}_{L^\infty_t L^2_{x,v}}^{2/3} \normm{\e^{1/2} f^{l}}_{L^\infty_{t,x,v}}^{4/3},
\end{align*}
where $c_3$ is a fixed constant. 

By Proposition \ref{l3estimate}, the $L^3$ bound is
\begin{align*}
\normm{S_1 f}_{L^2_t L^3_x}^2 \le& c_4 \normm{f}_{L^2_t L^2_x}^2 + c_4 \normm{g}_{L^2_t L^2_x}^2 + c_4 \normm{f_0 }_{L^2_t L^2_x}^2 + c_4 \normm{ v \cdot \nabla f_0}_{L^2_t L^2_x}^2 + c_4 \norm{f_0}_{\g}^2 + c_4 \norm{f}_{L^2_t \g}^2 \\
\le& \int_{0}^T C_{\text{initial}} \dd s + c_4 \mathscr{D}[f^{l}](t) +C_{\text{initial}}  \int_{0}^{t}e(s)[f^{l}] \dd s\\
& + c_4 \e^{} (\normm{S_1 f^{l}}_{L^2_t L^3_{x,v}}^2+\normm{\pt_t S_1 f^{l}}_{L^2_t L^3_{x,v}}^2) \normm{\P_{\c} f^{l}}_{L^\infty_t L^6_{x,v}}^2 \\
& + c_4 \e^{2} \normm{\e^{-1} \pt_t \ipc \e^{1/2} f^{l}}_{L^2_{t,x,v}}^2 \normm{f^{l}}_{L^\infty_{t,x,v}}^2 \\
&+ c_4 \e^{} (\normm{S_1 f^{l}}_{L^2_t L^3_{x,v}}^2 + \normm{\pt_t S_1 f^{l}}_{L^2_t L^3_{x,v}}^2) \normm{\e^{-1} \ipc f^{l}}_{L^\infty_t L^2_{x,v}}^{2/3} \normm{\e^{1/2} f^{l}}_{L^\infty_{t,x,v}}^{4/3},
\end{align*}
where $c_4$ is a fixed constant.

Set $M= 2 T_0 C_{\text{initial}}$ and define space as
\begin{align*}
H := \Bigg\{f :& e(t) +\mathscr{D}(t) \le M \exp{(Mt)}, \normm{\P_\c f^{}}_{L^\infty_t L^{6}_{x}}^2 \le 2(c_3+1) M\exp{(MT_0)}, \\
& \normm{\e^{1/2} \omega f^{}(t)}_{\infty}^2 \le 2(c_2c_3 + c_2 +1) M\exp{(MT_0)}, \normm{S_1 f}_{L^2_t L^3_x}^2 \le 2(c_4+1) M\exp{(MT_0)}
  \Bigg\}
\end{align*}

Let's set $\e$ small enough to satisfy
$\e^{} \max\{c_1,c_2,c_3,c_4,1\} ^2 M\exp{(MT_0)} \ll 1$

Then according to the above relation, the map $\Phi : f^l \to f^{l+1}$ is bounded map from $H$ to $H$ by Gronwall's lemma. In addition,
\begin{eqnarray*}
\G(g_1,g_1)-\G(g_2,g_2) &=& \G(g_1-g_2,g_1) +\G(g_2,g_1-g_2) \\
\pt_t \G(g_1,g_1)-\pt_t \G(g_2,g_2) &=& \G(\pt_t g_1 - \pt_t g_2,g_1) + \G(\pt_t g_2,g_1-g_2) + \G( g_1 -g_2,\pt_t g_1 ) + \G(g_2,\pt_t g_1 -\pt_t g_2)
\end{eqnarray*}
Thus,
$\Phi$ is a contraction map.
Thus $\Phi$ has a unique fixed point which is a solution of the equation.
\end{proof}


\begin{thebibliography}{20}


\bibitem{Asano1983}
Asano, K., Ukai, S.: The Euler limit and the initial layer of the nonlinear Boltzmann equation. Hokkaido Math. J. 12, 303–324 (1983)


\bibitem{Bae2010}
Bae, H.-O., Roh, J.: Stability for the 3D Navier–Stokes equations with nonzero far field velocity on exterior domains. J. Math. Fluid Mech. 14, 117-139 (2010)


\bibitem{Bardos1991}
Bardos, C., Ukai, S.: The classical incompressible Navier–Stokes limit of the Boltzmann equation. Math. Mod. Meth. Appl. Sci. 1, 235 (1991)


\bibitem{Bardos1991-1}
Bardos, C., Golse, F., Levermore, D.: Fluid Dynamic Limits of Kinetic Equations I: Formal Derivations. J. Statistical Physics  63, 323–344 (1991)

\bibitem{Bardos1993}
Bardos, C., Golse, F., Levermore, D.: Fluid dynamical limits of kinetic equations, II: convergence proofs for the Boltzmann equation. Comm. Pure Appl. Math. 46, 667–753 (1993)



\bibitem{Borchers1991}
Borchers, W., Miyakawa, T.: Algebraic $L^2$ decay for Navier–Stokes flows in exterior domains, II.
Hiroshima Math. J. 21, 621–640 (1991)


\bibitem{Caflisch1980}
Caflisch, R.E.: The fluid dynamical limit of the nonlinear Boltzmann equation. Comm. Pure Appl. Math. 33, 651–666 (1980)

\bibitem{Cao2023}
Cao, Y., Jang, J., Kim, C.: Passage from the Boltzmann equation with diffuse boundary to the incompressible Euler equation with heat convection. J. Differential Equations 366, 565–644. (2023)

\bibitem{Cercignani1994}
Cercignani, C., Illner, R., Pulvirenti, M.: The Mathematical Theory of Dilute Gases. Springer, New
York (1994)

\bibitem{Masi1989}
De Masi, A., Esposito, R., Lebowitz, J.L.: Incompressible Navier–Stokes and Euler Limits of the Boltzmann Equation. Comm. Pure Appl. Math. 42, 1189–1214 (1989)

\bibitem{Diperna1989}
DiPerna, R.J., Lions, P.L.: On the Cauchy problem for Boltzmann equations: global existence and weak stability. Ann. Math. 130, 321–366 (1989)

\bibitem{Duan2008}
Duan, J.: On the Cauchy problem for the Boltzmann equation in the whole space: Global existence and uniform stability in $L^2_{\xi} (H^N_x)$. J. Differential Equations 244, 3204-3234 (2008)

\bibitem{Duan2012}
Duan, J., Yang, T., Zhao, H.
: The Vlasov-Poisson-Boltzmann system in the whole space: the hard potential case.
J. Differential Equations 252, 6356–6386
 (2012)


\bibitem{Enomoto2005}
Enomoto, Y., Shibata, Y.: On the rate of decay of the Oseen semigroup in exterior domains and its application to Navier–Stokes equation. J. Math. Fluid Mech. 7, 339–367 (2005)


\bibitem{Esposito2013}
Esposito, R., Guo, Y., Kim, C., Marra, R.: Non-isothermal boundary in the Boltzmann theory and Fourier law. Comm. Math. Phys. 323, 177–239 (2013)


\bibitem{Esposito2018-apde}
Esposito, R., Guo, Y., Kim, C., Marra, R. : Stationary Solutions to the Boltzmann Equation in the Hydrodynamic Limit.
Ann. PDE (2018)

\bibitem{Esposito2018-cmp}
Esposito, R., Guo, Y., Marra, R. :
Hydrodynamic Limit of a Kinetic Gas Flow Past an Obstacle. Comm. Math. Phys. 364, 765–823 (2018)


\bibitem{Esposito2020}
Esposito, R., Guo, Y., Kim, C., Marra, R.: Diffusive limits of the Boltzmann equation in bounded domain. Ann. Appl. Math. 36, no. 2, 111–185. (2020)

\bibitem{Evans1998}
Evans, L.C.: Partial Differential Equations, Graduate Studies in Mathematics, vol. 19. American Mathematical Society, Providence (1998)


\bibitem{Galdi2011}
Galdi, G.P.:
An Introduction to the Mathematical Theory of the Navier-Stokes Equations Steady-State Problems. Springer (2011)

\bibitem{Giga1986}
Giga, Y.: Solutions for semilinear parabolic equations in $L^p$ and regularity of weak solutions of the Navier-Stokes system. J. Differential Equations 62, 186-212 (2008)




\bibitem{Glassey1996}
Glassey, R.: The Cauchy Problem in Kinetic Theory. SIAM (1996)


\bibitem{Golse2004}
Golse, F., Saint-Raymond, L.:The Navier–Stokes limit of theBoltzmann equation for bounded collision kernels. Invent. Math. 155, 81–161 (2004)

\bibitem{Guo2002}
Guo, Y.: The Vlasov-Poisson-Boltzmann system near Maxwellians, Comm. Pure Appl. Math. 55
, 1104–1135 (2002)

\bibitem{Guo2006}
Guo, Y. : Boltzmann Diffusive Limit Beyond the Navier-Stokes Approximation. Comm. Pure Appl. Math, 59, 0626–0687 (2006)


\bibitem{Guo2010-arma}
Guo, Y.: Decay and continuity of the Boltzmann equation in bounded domains. Arch. Ration. Mech. Anal. 197, 713–809 (2010)

\bibitem{Guo2010}
Guo, Y., Jang, J.: Global Hilbert expansion for the Vlasov–Poisson–Boltzmann system. Comm. Math. Phys. 299, 469–501 (2010)



\bibitem{Guo2010_2}
Guo, Y., Jang, J., Jiang, N.: Acoustic limit for the Boltzmann equation in optimal scaling. Comm. Pure Appl. Math. 63, 337–361 (2010)


\bibitem{Hilbert1916}
Hilbert, D.: Begründung der kinetischen Gastheorie. Math. Ann. 72, 331–407 (1916/17)

\bibitem{Hilbert_}
Hilbert, D.: Grundzügeiner allgemeinen Theorie der linearen Integralgleichungen. Chelsea, New York

\bibitem{Hilbert1900}
Hilbert, D.: Mathematical Problems. Göttinges Nachrichten, pp. 253–297 (1900)


\bibitem{Huang2010}
Huang, F., Wang, Y., Yang, T.: Hydrodynamic limit of the Boltzmann equation with contact discontinuities. Comm. Math. Phys. 295, 293–326 (2010)

\bibitem{Jang2009}
Jang, J.: Vlasov-Maxwell-Boltzmann diffusive limit. Arch. Ration. Mech. Anal. 194, no. 2, 531–584. (2009)

\bibitem{Jang2021}
Jang, J., Kim, C.: Incompressible Euler limit from Boltzmann equation with diffuse boundary condition for analytic data. Ann. PDE 7, no. 2, Paper No. 22, 103 pp. (2021)


\bibitem{kato1984}
Kato, T.: Strong $L^p$-Solutions of the Navier-Stokes equation in $R^m$, with applications to weak solutions. Mathematische Zeitschrift. Vol 187, 471-480 (1984)

\bibitem{Kobayashi1998}
Kobayashi, T., Shibata, Y.On the Oseen equation in the three dimensional exterior domains. Math. Ann. 310, 1-45 (1998)


\bibitem{Lachowicz1987}
Lachowicz, M.: On the initial layer and the existence theorem for the nonlinear Boltzmann equation. Math. Methods Appl. Sci. 9(3), 27–70 (1987)


\bibitem{Lions2001}
Lions, P.-L., Masmoudi, N.: From the Boltzmann equations to the equations of incompressible fluid mechanics. I, II. Arch. Ration. Mech. Anal. 158(3), 173–193, 195–211 (2001)


\bibitem{Maslova1993}
Maslova, N.: Nonlinear Evolution Equations. World Scientific, Singapore (1993)


\bibitem{Masmoudi2003}
Masmoudi, N., Saint-Raymond, L.: From the Boltzmann equation to the Stokes–Fourier system in a bounded domain. Comm. Pure Appl. Math. 56(9), 1263–1293 (2003)



\bibitem{Nishida1978}
Nishida, T.: Fluid dynamical limit of the nonlinear Boltzmann equation to the level of the compressible Euler equation. Comm. Math. Phys. 61, 119–148 (1978)


\bibitem{Saint-Raymond2009}
Saint-Raymond, L.: Hydrodynamic limits of the Boltzmann equation. Lecture Notes in Mathematics, no. 1971. Springer, Berlin (2009)





\bibitem{Sone1971}
Sone, Y.: Asymptotic Theory of Flow of a Rarefied Gas over a Smooth Boundary II. In: Rarefied Gas Dynamics. Vol. II, pp. 737–749, ed. by D. Dini. Pisa: Editrice Tecnico Scientifica (1971)


\bibitem{Sone1987}
Sone, Y., Aoki K.: Steady Gas Flows Past Bodies at Small Knudsen Numbers - Boltzmann and Hydrodynamics Systems, Trans. Theory Stat. Phys., 16, 189-199 (1987)


\bibitem{Strain2006}
Strain, R. M.
: The Vlasov-Maxwell-Boltzmann system in the whole space.
Comm. Math. Phys. 268, 543–567 (2006)


\bibitem{Tolksdorf2020}
Tolksdorf, P., Watanabe, K.: The Navier–Stokes equations in exterior Lipschitz domains: $L^p$-theory. J. Differential Equations 269, 5765–5801 (2020)


\bibitem{Ukai1983}
Ukai, S., Asano, K.: Steady solutions of the Boltzmann equation for a gas flow past an obstacle I Existence. Arch. Ration. Mech. Anal. 84, 249–291 (1983)


\bibitem{Ukai1986}
Ukai, S., Asano, K.: Steady solutions of the Boltzmann equation for a gas flow past an obstacle. II. Stability. Publ. Res. Inst. Math. Sci. 22, 1035–1062 (1986)


\bibitem{Ukai2009}
Ukai, S., Yang, T., Zhao, H.: Stationary solutions to the exterior problems for the Boltzmann equation. I. Existence. Discrete Contin. Dyn. Syst. 23, 495–520 (2009)


\bibitem{Varnhorn2022}
Varnhorn, W. On the Poisson equation in exterior domains. Constructive Mathematical Analysis 5, No. 3, pp. 134-140 (2022)


\bibitem{Wigner2000}
Wiegner, M.: Decay estimates for strong solutions of the Navier–Stokes equations in exterior domains. Ann. Univ. Ferrara Sez. VII.
Sc. Mat. 46, 61–79 (2000)


\bibitem{Xia2017}
Xia J.: Exterior problem for the Boltzmann equation with temperature difference: asymptotic stability of steady solutions. J. Differential Equations. 262, 3642–3688 (2017)

\bibitem{Yu2005}
Yu, S.-H.: Hydrodynamic limits with shock waves of the Boltzmann equation. Comm. Pure Appl. Math. 58, 409–443 (2005)



\end{thebibliography}
\end{document}